\numberwithin{equation}{section}
\newtheoremstyle{fancy1}{10pt}{10pt}{\itshape}{12pt}{\textsc\bgroup}{.\egroup}{8pt}{
}
\newtheoremstyle{fancy2}{10pt}{10pt}{}{12pt}{\itshape}{.}{8pt}{ }
\theoremstyle{fancy1}
\newtheorem{cor}[equation]{Corollary}
\newtheorem{lem}[equation]{Lemma}
\newtheorem{thm}[equation]{Theorem}
\newtheorem{main}{Theorem}
\newtheorem*{main*}{Theorem}
\newtheorem*{cor*}{Corollary}
\theoremstyle{fancy2}
\newtheorem{rem}[equation]{Remark}
\newtheorem*{rem*}{Remark}
\newcommand{\cref}[1]{Corollary~\ref{#1}}
\newcommand{\Sph}{\mathbb{S}}
\newcommand{\Disc}{\mathbb{D}}
\newcommand{\gS}{\mathsf{S}}
\newcommand{\eps}{\varepsilon}
\newcommand{\RP}{\mathbb{R\mkern1mu P}}
\newcommand{\CP}{\mathbb{C\mkern1mu P}}
\newcommand{\C}{{\mathbb{C}}}
\newcommand{\R}{{\mathbb{R}}}
\newcommand{\Z}{{\mathbb{Z}}}
\newcommand{\SO}{\ensuremath{\operatorname{\mathsf{SO}}}}
\newcommand{\U}{\ensuremath{\operatorname{\mathsf{U}}}}
\newcommand{\gU}{\ensuremath{\operatorname{\mathsf{U}}}}
\newcommand{\SU}{\ensuremath{\operatorname{\mathsf{SU}}}}
\newcommand{\Spin}{\ensuremath{\operatorname{\mathsf{Spin}}}}
\newcommand{\T}{\ensuremath{\operatorname{\mathsf{T}}}}
\renewcommand{\S}{\ensuremath{\operatorname{\mathsf{S}}}}
\newcommand{\hiota}{\hat{\iota}}\newcommand{\hPi}{\hat{\Pi}}
\newcommand{\hGamma}{\hat{\Gamma}}
\def\con#1=#2(#3){#1 \equiv #2 \bmod{#3}}
\newcommand{\ml}{\langle}                     
\newcommand{\mr}{\rangle}                    
\newcommand{\tv}{\tilde{v}}
\newcommand{\tiota}{\tilde{\iota}}
\newcommand{\no}{\noindent}
\begin{document}
\date{\today}

\title[Nonnegatively curved 4-manifolds with circle symmetry]{A knot characterization and 1-connected nonnegatively curved 4-manifolds with circle symmetry}


\author{Karsten Grove}
\address{University of Notre Dame\\
       Notre Dame, IN 46556-4618}
\email{kgrove2@nd.edu}

\author{Burkhard Wilking}
\address{University of M\"{u}nster\\
       Einsteinstrasse 62\\
       48149 M\"{u}nster, Germany}
\email{wilking@math.uni-muenster.de}


\thanks{
}

\maketitle


It is known that the only (closed simply connected) positively curved 4-manifolds
with infinite isometry group (equivalently having circle symmetry) are $\Sph^4$ and $\CP^2$.
In the case of nonnegative curvature additionally only $\Sph^2\times\Sph^2$ and $\CP^2\#\pm\CP^2$ will occur.
Topologically this is due via Freedmann's work \cite{Fr} to Hsiang  - Kleiner \cite{HK} in positive curvature
and Kleiner, Searle - Yang \cite{Kl,SY} in non-negative curvature, and differentiably it follows via the Poincar\'{e}
conjecture by their work and work of Fintushel and Pao \cite{Fi}, \cite{Fi2}, \cite{Pa}.

Our main purpose here is to provide a classification of (closed) simply connected nonnegatively
curved 4-manifolds with an isometric circle action up to equivariant diffeomorphism. In particular, we have

\begin{main}\label{linear}
A closed positively curved $4$-manifold $M$ with an isometric $\S^1$ action is equivariantly diffeomorphic
to a linear action on $\Sph^4$, $\RP^4$ or $\CP^2$.
\end{main}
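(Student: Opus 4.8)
The plan is to leverage the known (non-equivariant) diffeomorphism classification and upgrade it to an equivariant statement by analyzing the fixed-point and isotropy structure forced by positive curvature, and then to appeal to the equivariant classification of smooth circle actions on simply connected $4$-manifolds. \emph{Step 1 (reduction to $\Sph^4$ and $\CP^2$).} Since $M$ is even-dimensional and positively curved, Synge's theorem gives $\pi_1(M)\in\{1,\Z/2\}$. If $\pi_1(M)=\Z/2$, the universal cover $\tM$ is a simply connected positively curved $4$-manifold to which the identity component of $\Iso(M)$ lifts, so $\tM$ carries an isometric $\S^1$-action; by Hsiang--Kleiner \cite{HK}, Freedman \cite{Fr}, and the Poincar\'e conjecture, $\tM$ is diffeomorphic to $\Sph^4$ or $\CP^2$, and since $\chi(\CP^2)=3$ is odd, $\CP^2$ admits no free involution; hence $\tM=\Sph^4$ and $M=\Sph^4/\Z_2$. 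Thus it suffices to treat $M$ simply connected, diffeomorphic to $\Sph^4$ or $\CP^2$; the $\RP^4$ case then follows by running the argument below $\Z_2\times\S^1$-equivariantly on $\Sph^4$, which in particular shows the deck involution is conjugate to the antipodal map.

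\emph{Step 2 (the fixed-point configuration).} By Berger's theorem $M^{\S^1}\neq\emptyset$, and its components are totally geodesic of even codimension, hence isolated points or closed surfaces. A fixed surface $\Sigma$ is a closed positively curved totally geodesic surface whose normal bundle carries a fibrewise $\S^1$-rotation, hence is an oriented $2$-plane bundle; as $M$ is orientable this forces $\Sigma$ to be oriented, so $\Sigma\cong\Sph^2$. By Frankel's theorem, two disjoint totally geodesic surfaces cannot coexist in a positively curved $4$-manifold, so there is at most one fixed surface. Since $\chi(M)=\chi(M^{\S^1})$, exactly four configurations remain: $M=\Sph^4$ with two isolated fixed points or with one fixed $\Sph^2$; $M=\CP^2$ with three isolated fixed points or with one fixed $\Sph^2$ and one isolated fixed point.

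\emph{Step 3 (isotropy and exceptional orbits).} Next I would pin down the non-principal isotropy. At a fixed surface the slice representation is a weight-$m$ rotation, and then $\Z_m\subset\S^1$ fixes a neighbourhood of $\Sigma$, hence all of $M$; by effectiveness $m=\pm1$, so no exceptional orbits are adjacent to $\Sigma$, and the self-intersection of $\Sigma\cong\Sph^2$ equals the Euler number of its normal bundle, namely $0$ if $M=\Sph^4$ and $\pm1$ if $M=\CP^2$. At an isolated fixed point the slice representation has weights $(a,b)$ with $a,b\neq0$, from which emanate arcs of $\Z_{|a|}$- and $\Z_{|b|}$-exceptional orbits in the orbit space $M^*=M/\S^1$. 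For $k>1$ the set $M^{\Z_k}$ is totally geodesic of even codimension and contains $M^{\S^1}$, hence is again a union of points and $2$-spheres; applying Berger and Frankel to the residual $\S^1/\Z_k$-action on $M^{\Z_k}$ shows that the $\Z_k$-exceptional locus consists of arcs joining isolated fixed points — it cannot terminate on a fixed surface, and it cannot close into a circle without producing $\Z_k$-homology incompatible with $M\in\{\Sph^4,\CP^2\}$. Combined with the observation that $M^*$ is a simply connected positively curved Alexandrov $3$-space, so (by the geometry of such spaces) either $D^3$ with $\partial D^3\cong\Sigma$ or a spherical space-form orbifold, this forces the weighted orbit space of $M$ to be exactly that of one of the linear models: for $\Sph^4$, either an interval (orbit space $D^3$, boundary the fixed $\Sph^2$) or a single weighted arc joining two fixed points; for $\CP^2$, either a triangle on three fixed points carrying the weights $(a,b)$, or one arc from the fixed $\Sph^2$ to the fixed point.

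\emph{Step 4 (identification).} Finally I would invoke the equivariant classification of smooth circle actions on simply connected $4$-manifolds due to Fintushel and Pao \cite{Fi,Fi2,Pa}: such an action is determined up to equivariant diffeomorphism by its weighted orbit data, and the data extracted in Steps 2--3 coincides with that of the linear action with the same weights on $\Sph^4$ or $\CP^2$ — in particular the fixed $\Sph^2$, bounding $D^3$ in $M^*$ and having self-intersection $0$ or $\pm1$, is unknotted. In the $M=\Sph^4/\Z_2$ case the same argument carried out $\Z_2\times\S^1$-equivariantly yields the linear action on $\RP^4$. The delicate point throughout is Step 3: smooth (a priori even positively curved) circle actions on $\Sph^4$ with knotted fixed surfaces, or with additional exceptional-orbit circles, are not excluded by homology alone, and ruling them out is precisely where positive curvature is used — through the totally geodesic structure of the intermediate fixed sets $M^{\Z_k}$, Frankel's theorem, and the positively curved Alexandrov geometry of $M/\S^1$.
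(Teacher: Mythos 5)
Your overall scaffolding matches the paper's: reduce to the simply connected case via Synge, establish $M\cong\Sph^4$ or $\CP^2$ from Hsiang--Kleiner plus Freedman and the Poincar\'e conjecture, pin down the isolated fixed points and the exceptional arcs in $M^*=M/\S^1$, and appeal to Fintushel--Pao to conclude the action is linear, with the $\RP^4$ case handled by running the argument $\Z_2$-equivariantly. That much is correct and is essentially what the paper does (Theorem~A is deduced via Theorem~7.1 of Fintushel once the orbit data has been determined).

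The gap is exactly at the point you flag as ``delicate'': you never actually show that the singular circle $c\subset M^*\cong\Sph^3$ (the union of the closures of the exceptional arcs, running through the isolated fixed points) is the \emph{unknot}. Fintushel--Pao's classification encodes the knot type of $c$ in the weighted orbit data, and Pao's own examples show that \emph{every} locally flat knot arises from some smooth $\S^1$-action on $\Sph^4$. So the orbit data you extract in Step~3 does not ``coincide with that of the linear model'' until the unknottedness of $c$ is established, and this is where positive curvature must enter. Your Step~3 arguments do not reach this: the totally-geodesic structure of $M^{\Z_k}$ and Frankel constrain each \emph{individual} stratum (e.g., no closed circle of $\Z_k$-orbits without fixed points, because its preimage would be a flat torus or Klein bottle, contradicting positive curvature and total geodesy -- note this is a curvature argument, not a $\Z_k$-homology argument), but they say nothing about how the closures of several different strata assemble into a circle in $\Sph^3$, which is precisely where knotting could occur. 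The paper's actual mechanism is the double branched cover $M^*_2(c)$: it shows (Lemma~2.4) that $M^*_2(c)$ is again an Alexandrov space with the same lower curvature bound, invokes the knot characterization (Theorem~C, which rests on the Dinkelbach--Leeb equivariant geometrization) to show $\pi_1(M^*_2(c))$ would have order $\ge 3$ or be infinite if $c$ were knotted, and then uses the bound of Lemma~2.6 on the number of very singular points of a positively curved Alexandrov $3$-space to contradict the resulting multiplication of singular points in the universal cover. None of these ideas appears in your proposal, and without them your Step~4 invocation of Fintushel--Pao is not yet licensed.

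Two smaller issues. First, your Step~1 treatment of $\RP^4$ is too quick: after reducing to $\tilde M=\Sph^4$ you still need to show the deck involution is \emph{standard}, not merely free and commuting with $\S^1$; the paper does this in Remark~3.1 by invoking the equivariant Poincar\'e conjecture (or via the fixed-point homogeneous classification in the relevant subcase). Second, the clause ``it cannot close into a circle without producing $\Z_k$-homology incompatible with $M\in\{\Sph^4,\CP^2\}$'' should be replaced by the curvature argument just indicated; as written it is not a valid deduction, and more importantly it addresses only free exceptional circles, not the knotting of the assembled singular circle through the fixed points.
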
 

This actually is a consequence of the following general

\begin{main}\label{torus}
A closed nonnegatively curved simply connected $4$-manifold $M$ with an isometric $\S^1$ action
is diffeomorphic to $\Sph^4, \CP^2, \Sph^2\times\Sph^2$, or one of $\CP^2 \# \pm \CP^2$, and the action
extends to a smooth $\T^2$ action.
\end{main}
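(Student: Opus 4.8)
The plan is to transfer everything to the geometry of the quotient $X:=M/\S^1$ together with the isotropy data carried along its singular set, and then to reconstruct $M$ from that data. Since $M$ is simply connected, $\chi(M)>0$, so by the Lefschetz fixed point formula $\chi(M)=\chi\bigl(M^{\S^1}\bigr)>0$ and the action has nonempty fixed point set $F$; each component of $F$ is a totally geodesic submanifold, hence a point or a closed nonnegatively curved surface. The orbit map $M\to X$ is a submetry, so $X$ is a $3$-dimensional Alexandrov space of nonnegative curvature; and by the local structure theory of locally smooth circle actions on $4$-manifolds (Fintushel) $X$ is at the same time a topological $3$-manifold with boundary. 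Here $\partial X$ is the image of the union of the $2$-dimensional components of $F$, while the images of the isolated fixed points and of the one-parameter families of exceptional orbits form a graph $\Gamma$ in the interior of $X$, each vertex and edge of which is labelled by the weights of the corresponding slice representation. Because $\S^1$ is connected and $M$ is $1$-connected, $X$ is $1$-connected as well.

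Next I would identify $X$ as a space. Along a $2$-dimensional component $\Sigma$ of $F$ the slice theorem presents the action as the fibrewise rotation of a normal $2$-plane bundle, so $X$ is collared along $\Sigma$ with $\partial X\cong\Sigma$ on that piece; the half-lives-half-dies principle together with the Poincar\'e conjecture then forces every boundary component of $X$ to be a $2$-sphere, and capping these off identifies $X$ with $\Sph^3$ with finitely many open balls removed. In particular every $2$-dimensional fixed component is an $\Sph^2$, and the number of them equals the number of boundary spheres of $X$.

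Now the curvature hypothesis must be used decisively, and this is the step I expect to be the main obstacle. The boundary spheres together with the edges and vertices of $\Gamma$ form a union of extremal subsets of the Alexandrov space $X$; I would double $X$ along $\partial X$ to obtain a closed, $1$-connected, nonnegatively curved $3$-dimensional Alexandrov space, apply Toponogov comparison and the rigidity of extremal subsets in nonnegative curvature, and combine this with the weight relations that simple connectivity of $M$ imposes on $\Gamma$. The target is the knot characterization announced in the title: among all weighted orbit spaces that reconstruct a $1$-connected $4$-manifold, only a short explicit list is compatible with an invariant metric of nonnegative curvature --- no essential knotting of $\Gamma$, at most a single edge together with its two endpoints (with all weights in ``linear position''), at most two fixed $\Sph^2$'s with normal Euler numbers $0$ or $\pm1$, and $b_2(M)\le 2$. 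Carrying out these comparison estimates at the singular strata, and ruling out longer or knotted configurations, is the technical heart of the argument.

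Finally, for each weighted orbit space on that list I would invoke the reconstruction of a circle $4$-manifold from its orbit data --- Fintushel in general, Orlik--Raymond and Pao in the toric cases --- to recover $M$ up to equivariant diffeomorphism; running through the list yields precisely $\Sph^4$, $\CP^2$, $\Sph^2\times\Sph^2$, and $\CP^2\#\pm\CP^2$. In each case the resulting weighted orbit space is literally that of a linear $\T^2$-action on the corresponding manifold restricted to an $\S^1$, which simultaneously confirms the diffeomorphism type and supplies the asserted extension of the $\S^1$-action to a smooth $\T^2$-action. Theorem~A is then immediate, since under positive curvature the cases $\Sph^2\times\Sph^2$ and $\CP^2\#\pm\CP^2$ are excluded and only the linear models on $\Sph^4$, $\RP^4$ and $\CP^2$ remain.
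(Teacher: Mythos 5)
Your reduction to orbit-space analysis is in the right spirit, and your identification of the ``technical heart'' --- ruling out all but a short list of weighted orbit spaces, in particular ruling out knotted singular configurations --- is accurate. But you do not supply the idea that actually does this, and the substitute you gesture at does not work. You propose to double $X$ along $\partial X$ and then apply ``Toponogov comparison and rigidity of extremal subsets.'' The knotting problem, however, lives entirely in the case where the fixed point set is discrete, i.e.\ $\partial X=\emptyset$, so there is nothing to double, and ``rigidity of extremal subsets'' by itself gives no handle on a knot type. Pao's examples show that a circle action on $\Sph^4$ can have an arbitrary (locally flat) knot as the closure of its exceptional strata, so some genuinely geometric input is required to exclude them under nonnegative curvature. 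The paper's device is precisely this missing input: one forms the $2$-fold branched cover $M^*_2(c)$ of the orbit space along the singular circle $c$, proves it is again a nonnegatively curved Alexandrov space whose spaces of directions along $c$ are the canonical branched covers (Lemma 2.5), couples this with the packing bound that a nonnegatively curved $3$-dimensional Alexandrov space carries at most four points as singular as $\Sph^2(1/2)$ (Lemma 2.7), and invokes the knot characterization Theorem C (whose proof uses the equivariant geometrization of $3$-manifolds). Nothing in your plan plays the role of the branched cover step, and the comparison estimates you allude to are not indicated even in outline.

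A secondary issue is that the list of permissible orbit data you write down is not what the argument produces. In the isolated-fixed-point case the singular set is a closed unknotted curve that is a right-angled biangle, triangle, or quadrangle with \emph{all} fixed points as its vertices (not ``a single edge together with its two endpoints''), and in the $\chi(M)=4$ case the pieces $A$ and $B$ of the disc bundle decomposition are \emph{invariant} $2$-spheres lying over opposite edges of the quadrangle, not fixed $2$-spheres; fixed $2$-spheres occur only in the fixed-point homogeneous case ($\partial M^*\neq\emptyset$), which the paper disposes of by citing earlier work rather than re-deriving. Finally, the extension to a $\T^2$-action: your appeal to Fintushel/Orlik--Raymond/Pao reconstruction is a legitimate way to finish once the orbit data is pinned down (the paper also refers to Fintushel's Theorem 7.1), but the paper additionally gives a direct construction via the double disc bundle decomposition, and in the $\chi=4$ case supplies a geometric Toponogov-rigidity proof that avoids the Poincar\'e conjecture altogether.
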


All such $\T^2$ actions have been classified by Orlik and Raymond in \cite{OR}, and
it turns out that for each such action there is an invariant metric of nonnegative curvature.
In fact, in the cases of Euler characteristic 2, or 3, such an action is linear, and if the
Euler characteristic is 4, any such $\T^2$ action can be obtained as the induced quotient action
on a $\T^2$ quotient of the standard $\T^4$ action of $\Sph^3 \times \Sph^3$ \cite{GK}. 

\medskip
Our proof uses Alexandrov geometry and the solution of the Poincar\'{e} conjecture in essential ways
(cf. Remark 5.6 (b) though). The point of departure for the latter is the simple fact that the orbit space
$M^*=M/\S^1$ is a simply connected topological 3-manifold, hence $\Sph^3$ (or $\Disc^3$). It is intriguing,
that in the presumably more complicated case of Euler characteristic 4, there is more geometric rigidity and
as a consequence, the Poincar\'{e} conjecture is not needed here (see section 4). In other words, we offer a
purely geometric proof of the equivariant classification when the Euler characteristic is four.

\medskip
By work of Pao \cite{Pa} it is known that there are $\gS^1$-actions on $\Sph^4$ such that the the singular 
set of the orbit space $\Sph^4/\gS^1$ is given by a knotted closed curve and in fact 
any locally flat knot can be realized in this way. 
Hence our equivariant classification relies on showing that there can be no knotted circles, $c$
in the singular set of $M^*$. For this we consider the canonical two fold branched cover of $M^*$ branched along
a circle in the singular set  $c$, denoted by $M^*_2(c)$. The following characterization is pivotal 

\begin{main}\label{knot}
Let $c$ be a locally flat embedded $\Sph^1$ in $\Sph^3$ and let $\Sph^3_2(c)$ denote the
corresponding canonical 2-fold branched cover. Then $\Sph^3_2(c)$ is the 3-sphere if $c$ is unknotted,
and otherwise $\pi_1(\Sph^3_2(c))$ is infinite or has order at least $3$. 
\end{main}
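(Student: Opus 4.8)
The plan is to treat the unknotted and knotted cases separately, disposing of the first by an explicit Heegaard-splitting computation and reducing the second to two classical inputs: the $3$-dimensional Poincar\'{e} conjecture together with the resolution of the Smith conjecture, and the fact that the knot determinant is odd. At the outset I would record that a locally flat circle in $\Sph^3$ is tame, hence has a product normal neighborhood and a Seifert surface; this makes the branched double cover the standard object, namely the connected double cover of the exterior $Y=\Sph^3\setminus\operatorname{int} N(c)$ classified by $\pi_1(Y)\twoheadrightarrow H_1(Y;\Z)=\Z\twoheadrightarrow\Z/2$, with the solid torus about the branch locus reglued so as to extend the covering of $\Sph^3$.

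If $c$ is unknotted, $Y$ is a solid torus with $\pi_1=\Z$, so its connected double cover is again a solid torus; regluing the branch solid torus then presents $\Sph^3_2(c)$ as a union of two solid tori whose meridian curves meet once on the splitting torus, which forces $\Sph^3_2(c)\cong\Sph^3$. Now suppose $c$ is knotted. Since the only alternative to the claim is that $\pi_1(\Sph^3_2(c))$ is trivial or of order $2$, I would rule out exactly these two possibilities. First, if $\pi_1(\Sph^3_2(c))=1$, then $\Sph^3_2(c)$ is a closed simply connected $3$-manifold, hence $\Sph^3$ by the Poincar\'{e} conjecture; the deck involution $\tau$ is then an involution of $\Sph^3$ fixing a tame circle with quotient the pair $(\Sph^3,c)$, so by the Smith conjecture $\Fix(\tau)$ is unknotted and $\pi_1(\Sph^3\setminus\Fix(\tau))=\Z$. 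As this group injects with index $\le 2$ into the torsion-free group $\pi_1(\Sph^3\setminus c)$, the latter must itself be $\Z$, so $c$ is unknotted by the loop theorem --- contrary to assumption. Second, if $\pi_1(\Sph^3_2(c))$ had order $2$, then $H_1(\Sph^3_2(c);\Z)\cong\Z/2$; but $H_1$ of the double branched cover of a knot is finite of order $|\Delta_c(-1)|$, which is odd since $\Delta_c(-1)\equiv\Delta_c(1)=\pm1\pmod 2$ --- again a contradiction. Hence $\pi_1(\Sph^3_2(c))$ is infinite or has order at least $3$.

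The substantive content here is imported: the Poincar\'{e} conjecture and the Smith conjecture carry the proof, while everything else is routine low-dimensional topology. I expect the point needing care to be the topological (locally flat) category --- one must know that a locally flat $\Sph^1\hookrightarrow\Sph^3$ is tame, so that normal neighborhoods, Seifert surfaces, the well-definedness of $\Sph^3_2(c)$, and the PL (or smooth) form of the Smith conjecture are all available; this is standard in dimension three but worth an explicit remark. For sharpness one notes that the trefoil has $\Sph^3_2(c)=L(3,1)$, whose fundamental group has order exactly $3$.
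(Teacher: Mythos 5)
Your proof is correct, but it follows a genuinely different path from the paper's. The paper constructs an auxiliary orbifold metric on $\Sph^3$ with normal cone angle $\pi$ along $c$; this lifts to a Riemannian metric on $\Sph^3_2(c)$ for which the deck involution $\iota$ is an isometry, and a single appeal to the equivariant geometrization theorem of Dinkelbach--Leeb then says that whenever $\pi_1(\Sph^3_2(c))$ is finite, $\Sph^3_2(c)$ is a spherical space form with $\iota$ \emph{linear}. Order $1$ forces $\Sph^3_2(c)\cong \Sph^3$ with a linear reflection, so $c$ is unknotted; order $2$ forces $\Sph^3_2(c)\cong\RP^3$ with a linear involution, which necessarily fixes two disjoint circles rather than one --- a contradiction. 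You instead split the two cases and treat them with inputs of quite different weight: your $\pi_1=1$ case runs through the ordinary Poincar\'e conjecture followed by the Smith conjecture, while your $\pi_1\cong\Z/2$ case is killed purely homologically via $|H_1(\Sph^3_2(c))|=|\Delta_c(-1)|$ being odd, with no geometrization whatsoever. What the paper's route buys is uniformity (one construction, one theorem) and it avoids the separate Smith conjecture machinery, which predates but is not lighter than Perelman; what yours buys is that one of the two nontrivial cases becomes an elementary parity observation, and the mechanism that makes the result fail when $|\pi_1|\ge 3$ is made transparent by the trefoil example. A small polish: in your $\pi_1=1$ step, ``loop theorem'' should read Dehn's lemma (Papakyriakopoulos's unknotting criterion $\pi_1(\Sph^3\setminus c)\cong\Z\Rightarrow c$ unknotted); alternatively, you can skip the index-$2$ injection argument entirely by using that the Smith conjecture resolution gives the involution of $\Sph^3_2(c)\cong\Sph^3$ up to conjugation by a homeomorphism as the standard linear one, so the quotient pair is the standard $(\Sph^3,\text{unknot})$ outright.
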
 

The Alexandrov geometry of the orbit space $M^*= M/\S^1$ is already instrumental for the topological classification
alluded to above. Our use of the above knot characterization is based on the
key observation that also $M^*_2(c)$ is an Alexandrov space with the same lower curvature bound as that of $M^*$. When $
M^*$ has no boundary, i.e., $M^*$ is homeomorphic to the 3-sphere, we prove that any closed curve $c \subset M^*$ consisting
of non-principal orbits, is unknotted when $M^*$ has nonnegative curvature in distance comparison sense. A curve $c$
consisting of non-principal orbits provides an example of a so-called \emph{extremal set} in the Alexandrov space $X$. 

\bigskip

This naturally leads to the following question: Which knots $c$ in $\Sph^3$ can arise as extremal subsets
when $\Sph^3$ is equipped with the structure of an Alexandrov space with non-negative curvature.
We provide a characterization which
via the equivariant Poincar\'{e} conjecture leads to the following

\begin{main}
A knot $c$ in $\Sph^3$ can be an extremal set relative to a nonnegatively curved Alexandrov metri
c on the sphere if an only if it is a so-called spherical Montesinos knot of \emph{Cyclic}, \emph{Tetrahedral}
or \emph{Icosahedral} type. In particular all these knots arise as singular 
sets with respect to a constant curvature 1 orbifold metric on $\Sph^3$.
\end{main}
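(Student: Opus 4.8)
The plan is to pass to the canonical double branched cover, exploit that extremality keeps the lower curvature bound, and feed the resulting nonnegatively curved closed $3$-manifold into geometrization together with the dictionary relating Montesinos knots and Seifert fibred spaces; the converse direction is supplied by the known spherical orbifold structures carried by these knots. Indeed, if $c$ is a spherical Montesinos knot of Cyclic, Tetrahedral or Icosahedral type, then by Montesinos's description of Seifert fibred $3$-orbifolds the orbifold $\mathcal{O}(c)$ with underlying space $\Sph^3$ and singular locus $c$ of cone angle $\pi$ is good with finite orbifold fundamental group, hence is spherical: $\mathcal{O}(c)=\Sph^3/\hat\Gamma$ with $\hat\Gamma\subset\O(4)$ finite (the extension of the deck group $\Gamma$ of $\Sph^3_2(c)=\Sph^3/\Gamma$ by the canonical Montesinos involution, realized isometrically). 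The quotient metric makes $\Sph^3$ an Alexandrov space with $\sec\ge 1\ge 0$ in which $c$ --- the singular stratum, equivalently the fixed point set of the isometric involution $\Sph^3/\Gamma\to\Sph^3$ --- is an extremal subset; this also proves the concluding assertion.

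\textbf{Reduction to a spherical space form.} Conversely, let $c\subset\Sph^3$ be an extremal set for a nonnegatively curved Alexandrov metric on $X:=\Sph^3$, and set $Y:=\Sph^3_2(c)$. Since $c$ is locally flat and $X$ is a topological $3$-sphere, $Y$ is a closed topological $3$-manifold, and since $c$ is extremal the transverse cone angle along $c$ is at most $\pi$; hence --- by the key observation used throughout the paper for $M^*_2(c)$ --- the pulled back metric makes $Y$ a nonnegatively curved Alexandrov space, on which the deck involution $\tau$ acts isometrically with $Y/\tau=X$. As the determinant of $c$ is odd, $|H_1(Y;\Z)|$ is odd, so $Y$ is a $\Z/2$-homology sphere. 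A closed nonnegatively curved Alexandrov $3$-manifold with this property has finite fundamental group, because the infinite-$\pi_1$ ones --- the flat $3$-manifolds, the two $\Sph^2$-bundles over $\Sph^1$, and $\RP^3\#\RP^3$ --- all have nonzero first $\Z/2$-homology. By elliptization, $Y$ is therefore a spherical space form $\Sph^3/\Gamma$ with $\Gamma\subset\SO(4)$ acting freely.

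\textbf{Identification of the knot.} A spherical space form is Seifert fibred over a spherical or bad $2$-orbifold; by Montesinos's characterization of links whose double branched cover is such a space, $c$ is a Montesinos knot, and the base orbifold of $Y$ lies on the list $\Sph^2$, $\Sph^2(\alpha)$, $\Sph^2(\alpha,\beta)$, $\Sph^2(2,2,n)$, $\Sph^2(2,3,3)$, $\Sph^2(2,3,4)$, $\Sph^2(2,3,5)$. Because $|H_1(Y;\Z)|=|\Gamma^{\mathrm{ab}}|$ is odd, the dihedral/quaternionic case $\Sph^2(2,2,n)$ and the octahedral case $\Sph^2(2,3,4)$ cannot occur --- their space forms have first homology of even order, and at the level of Montesinos links these possibilities have at least two even branch indices, hence more than one component --- so only the Cyclic, Tetrahedral and Icosahedral types remain. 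Finally, $\tau$ has order two with a circle of fixed points, so it is conjugate to an isometry of the spherical metric on $Y$; thus $\hat\Gamma:=\langle\Gamma,\tau\rangle\subset\O(4)$ and $\mathcal{O}(c)=\Sph^3/\hat\Gamma$ is a spherical orbifold with underlying space $\Sph^3$ and singular locus $c$, exhibiting $c$ as a spherical Montesinos knot of one of the three types.

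\textbf{Main obstacle.} The curvature hypothesis enters exactly once, to make $\pi_1(Y)$ finite; after that the argument is topology (elliptization, and conjugating $\tau$ to an isometry) together with the classical correspondence between Montesinos links and Seifert fibred spaces. The delicate part is accordingly the final bookkeeping: matching, in both directions, the odd-first-homology spherical space forms that occur as double branched covers of knots with precisely the Cyclic, Tetrahedral and Icosahedral Montesinos knots, and verifying that for each of these the canonical involution's quotient orbifold has underlying space genuinely $\Sph^3$, so that the constant curvature $1$ example really lives on the sphere. One must also handle the borderline Seifert data (lens spaces, prism manifolds) where the Montesinos link need not be uniquely determined, although the odd-order constraint already discards the prism case.
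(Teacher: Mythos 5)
Your proposal is essentially correct and reaches the same conclusion by a closely parallel route: pass to the double branched cover $Y=\Sph^3_2(c)$, observe that the Alexandrov lower curvature bound survives (you defer to the paper's branched-cover lemma, which is appropriate since that is the genuinely technical step), deduce that $Y$ is a $\Z_2$-homology sphere with finite fundamental group, invoke equivariant elliptization, and identify the admissible quotient pairs. The differences are worth noting. First, where you obtain the $\Z_2$-homology sphere property from the classical fact that $\det(c)=|\Delta_c(-1)|$ is odd for any knot, the paper instead proves $H_1(X_2(c),\Z_2)=0$ from scratch by a group-theoretic argument: lifting the deck involution $\iota$ to the universal cover of $X_2(c)$, considering the normal subgroup generated by its conjugacy class, and deriving a contradiction with $\pi_1(\Sph^3)=1$. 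Your shortcut is legitimate and shorter, though it imports a nontrivial piece of knot theory the paper avoids. Second, you handle the identification step by citing Montesinos's correspondence between Seifert-fibred double branched covers and Montesinos links and then ruling out dihedral and octahedral base orbifolds by the odd-$H_1$ constraint; the paper instead does a direct analysis of the finite subgroups $\Pi\subset\gU(2)\subset\SO(4)$ and constructs, case by case (cyclic, $2T$, $2I$), the unique normalizing involution $\tiota$ (a complex conjugation) and verifies that $\langle\Pi,\tiota\rangle$ is generated by the conjugacy class of $\tiota$, hence that the quotient is topologically $\Sph^3$. Your route is shorter but leans harder on literature and on the uniqueness of Seifert fibrations for spherical space forms (lens spaces need a separate word, as you acknowledge in your closing paragraph); the paper's route is longer but self-contained and gives the explicit orbifold metrics stated in the second sentence of the theorem. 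Both arguments are sound.
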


It is our pleasure to thank Darryl McCullough for pointing out that our characterization of extremal knots
indeed describes a subclass of the spherical Montesinos knots, and for providing the reference \cite{Sa} for us.
We also thank Fernando Galaz-Garcia and Alexander Lytchak for helpful comments.


\section{A knot characterization}

In this section we will see that Theorem C
in the introduction is a simple consequence of the solution of the (equivariant) Poincar\'{e} conjecture.  

\smallskip

If $c$ is a closed embedded smooth circle in $\Sph^3$, it is well known that the complement of $c$
admits a canonical 2-fold cover corresponding to the 2-fold cover of the normal circle to $c$.
By gluing back $c$ we get the 2-fold branched cover $\Sph^3_2(c)$ alluded to in the introduction. 

\smallskip

If $c$ is the unknot it is clear that $\Sph^3_2(c)$ is the 3-sphere.

\smallskip

Now suppose $c$ is knotted (the subsequent construction works without this assumption). Choose an orbifold metric on
$\Sph^3$ with normal cone angle $\pi$ along $c$. This can be done, e.g., by choosing a metric invariant
under the antipodal map of the normal bundle in a small tubular neighborhood of $c$, taking the quotient by this isometric
involution and gluing it back via a partition of unity. The induced metric $g$ on $\Sph^3_2(c)$ is Riemannian and $c$ is a 
geodesic in $\Sph^3_2(c)$ fixed by a global isometric involution $\iota$, the covering transformation on $\Sph^3_2(c) - c$.

The following is crucial 

\begin{lem}
If $\pi_1(\Sph^3_2(c))$ is finite, then $\Sph^3_2(c)$ admits a (positive) constant curvature metric in which $\iota$ remains an isometry.
\end{lem}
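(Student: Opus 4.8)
The plan is to realize $\Sph^3_2(c)$ as a spherical space form (possibly $\Sph^3$ itself) equivariantly, by combining the geometrization of the Riemannian orbifold $(\Sph^3, g)$ with the orbifold Poincar\'e/spherical space form theorem. First I would observe that $g$ exhibits $\Sph^3$ as a closed, orientable Riemannian $3$-orbifold $\mathcal{O}$ whose singular locus is exactly $c$ with cone angle $\pi$; its orbifold fundamental group sits in an extension $1 \to \pi_1(\Sph^3_2(c)) \to \pi_1^{\mathrm{orb}}(\mathcal{O}) \to \Z/2 \to 1$ coming from the branched double cover, so $\pi_1^{\mathrm{orb}}(\mathcal{O})$ is finite as soon as $\pi_1(\Sph^3_2(c))$ is. The orbifold $\mathcal{O}$ is very good (it is globally the quotient of the manifold $\Sph^3_2(c)$ by the involution $\iota$), so one may invoke the geometrization of finite, closed, orientable $3$-orbifolds: an orientable closed $3$-orbifold with finite orbifold fundamental group is spherical, i.e., admits a metric of constant curvature $+1$ as a quotient $\Sph^3/\Gamma$ with $\Gamma \subset \SO(4)$ finite acting freely on $\Sph^3$ minus the preimage of the singular locus. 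Pulling this metric back through the double cover $\Sph^3_2(c) \to \mathcal{O}$ gives a constant curvature metric on $\Sph^3_2(c)$ for which the deck transformation — which by construction is the isometric involution $\iota$ — is an isometry. That is exactly the assertion.

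The one subtlety I would want to handle carefully is the identification of the geometrizing deck involution with the given $\iota$. A priori, geometrization produces \emph{some} free isometric involution on a spherical space form whose quotient orbifold is abstractly isomorphic (as an orbifold) to $\mathcal{O}$; one must know that under this orbifold isomorphism the new involution corresponds to $\iota$. This follows because the double cover $\Sph^3_2(c) \to \mathcal{O}$ is the \emph{canonical} cover associated to the cone-angle-$\pi$ singular locus $c$ (equivalently, the cover on which the local $\Z/2$-isotropy is killed), hence is unique up to equivariant diffeomorphism; so any orbifold isomorphism $\mathcal{O} \to \Sph^3/\Gamma$ lifts to an equivariant diffeomorphism $\Sph^3_2(c) \to \Sph^3/\Gamma'$ intertwining $\iota$ with the corresponding involution. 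Transporting the round metric back along this diffeomorphism yields the desired round metric on $\Sph^3_2(c)$ in which $\iota$ is an isometry, completing the proof.

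The main obstacle is justifying the orbifold geometrization input at the level of rigor the paper wants: one needs the statement that a closed orientable $3$-orbifold whose orbifold fundamental group is finite is a spherical orbifold. In our situation $\mathcal{O}$ is very good, with underlying space $\Sph^3$ and singular locus a knot with cone angle $\pi$, and its manifold double cover $\Sph^3_2(c)$ is a simply connected (when $\pi_1$ is trivial) or finite-$\pi_1$ closed $3$-manifold. Thus what is really needed is: (i) the Poincar\'e conjecture / elliptization theorem for closed $3$-manifolds with finite fundamental group (Perelman), giving that $\Sph^3_2(c)$ is itself a spherical space form $\Sph^3/\Gamma_0$; together with (ii) the \emph{equivariant} refinement that the finite-order involution $\iota$ on this space form can be conjugated to an isometry of the round metric. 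Part (ii) is the genuinely equivariant Poincar\'e-type statement the section advertises, and for involutions on spherical space forms it is available from the work on the Smith conjecture and finite group actions on $3$-manifolds (orthogonalization of finite group actions on spherical space forms); the plan is to cite precisely this. Everything else — the orbifold extension of fundamental groups, the cone-angle-$\pi$ description of the branched cover, and pulling back metrics — is routine.
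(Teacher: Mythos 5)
Your proposal lands on exactly the same key input as the paper, whose entire proof consists of the single sentence that the claim is a consequence of the ``equivariant'' Poincar\'e conjecture due to Dinkelbach and Leeb (Geom.\ Topol.\ {\bf 13} (2009)), i.e.\ that equivariant Ricci flow with surgery geometrizes a closed $3$-manifold with finite fundamental group while preserving a given smooth finite group action — which is precisely your step (ii) made precise. Your orbifold-geometrization framing and the identification of the geometrizing involution with $\iota$ via uniqueness of the canonical double cover are correct equivalent packaging; the only adjustment is to replace the vague pointer to ``work on the Smith conjecture'' with the Dinkelbach--Leeb reference, as the pre-Perelman Smith-conjecture literature does not cover orthogonalization of arbitrary involutions on all spherical space forms.
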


This is a consequence of the ``equivariant" Poincar\'{e} conjecture due to Dinkelbach and Leeb \cite{DL}. 
In particular, if $\pi_1(\Sph^3_2(c))$ has order two $\Sph^3_2(c)$ must be $\RP^3$ and the involution $\iota$ is linear.
This is already a contradiction, since such an $\iota$ will have two circles as fixed point set. 
This proves Theorem \ref{knot} modulo the following

\begin{lem}[Smoothing]\label{smoothing}
Any locally flat embedded circle $c$ in $\Sph^3$ can be smoothed.
That is, there is a homeomorphism of  $\Sph^3$ whose image of $c$ is a smooth submanifold.
\end{lem}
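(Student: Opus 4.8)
The plan is to reduce the statement to classical low-dimensional topology, with no input from curvature or symmetry. In dimension three the categories $\mathrm{TOP}$, $\mathrm{PL}$ and $\mathrm{DIFF}$ coincide (Moise, Whitehead), so it is enough to exhibit a homeomorphism of $\Sph^3$ carrying $c$ onto a piecewise linear circle, i.e.\ to show that $c$ is \emph{tame}; once $c$ is $\mathrm{PL}$, rounding its finitely many corners inside a compatible smooth atlas produces a smooth submanifold after a further $C^0$-small homeomorphism.

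First I would record the local picture. By the definition of local flatness every point of $c$ has a neighborhood $U\subset\Sph^3$ with $(U,U\cap c)$ homeomorphic to the standard pair $(\R^3,\R^1)$, and since $c$ is compact finitely many such charts $U_1,\dots,U_k$ cover it. Thus $c$ is locally tame in the sense of Bing and Moise: near each of its points it is, up to homeomorphism, a polygonal arc. I would then invoke the fundamental theorem of Moise (and Bing's theorem that locally tame sets are tame) that a locally tame $1$-complex in a $3$-manifold is globally tame: there is a homeomorphism $h_1$ of $\Sph^3$, which may even be taken isotopic to the identity, such that $h_1(c)$ is a subcomplex of a triangulation of $\Sph^3$, hence a $\mathrm{PL}$ circle. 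Equivalently, one may cite directly the classical dichotomy that a circle in $\Sph^3$ is tame if and only if it is locally flat.

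Finally, $h_1(c)$ is a $\mathrm{PL}$ circle in $\Sph^3$ with its standard $\mathrm{PL}$ structure, which is compatible with the standard smooth structure; in suitable coordinates $h_1(c)$ is a finite union of straight segments, and a standard smoothing (rounding each vertex by an arbitrarily small ambient push) yields a homeomorphism $h_2$ with $h_2(h_1(c))$ a smoothly embedded circle. Then $g=h_2\circ h_1$ is the desired homeomorphism of $\Sph^3$. The only step that is not purely formal is the passage from ``locally flat'' to ``$\mathrm{PL}$ after a homeomorphism'': a priori local flatness is weaker than local polyhedrality, but for $1$-complexes in $3$-manifolds no wildness survives that is not already visible locally, which is exactly the content of the Moise--Bing tameness theorem. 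I would emphasize that this lemma is a statement about topological embeddings only; combined with Lemma~\ref{smoothing}'s conclusion it lets one replace a locally flat $c$ by a smooth one without changing $\Sph^3_2(c)$ or the knot type of $c$, which is all that is needed to extend Theorem~\ref{knot} from the smooth to the locally flat case.
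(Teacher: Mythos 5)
Your proof is correct, and it takes a genuinely different (if cognate) route from the paper's.  The paper's argument is very terse: it invokes Kirby's theorem that locally flat codimension-two embeddings have normal bundles to conclude $c$ has a neighborhood homeomorphic to $\Sph^1\times\Disc^2$, and then combines this with the uniqueness of smooth structures on $3$-manifolds (Moise) to re-glue $\Sph^1\times\Disc^2$ smoothly to the knot exterior and recognize the result as $\Sph^3$.  You instead go through the classical Moise--Bing tameness theory: local flatness gives local tameness, locally tame $1$-complexes in $3$-manifolds are tame, and a tame ($\mathrm{PL}$) circle can be smoothed by rounding corners.  Both arguments ultimately rest on Moise's triangulation/Hauptvermutung work, but the specific auxiliary results invoked differ --- yours is more explicitly $3$-dimensional and self-contained in the Bing/Moise tradition, whereas the paper's leans on a codimension-two normal bundle theorem valid in all dimensions plus a gluing argument it leaves implicit.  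One small stylistic quibble: the closing sentence, which refers to "Lemma~\ref{smoothing}'s conclusion" while proving Lemma~\ref{smoothing}, reads as circular; it is only commentary on how the lemma will be used, but it should be rephrased or deleted.
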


This, however, is an immediate consequence of the fact that any topological 3-manifold has a unique
smooth structure up to diffeomorphism, combined with the observation that  
$c$ has a neighborhood homeomorphic to $\Sph^1 \times \Disc^2$, which is an immediate consequence of \cite{Ki}.


\section{Orbit spaces and branched covers}

We begin this section with an analysis of the orbit space $M^* = M/\S^1$  with its induced orbital metric.
We denote by  $M^{\S^1}$  the fixed point set of the action. The projection map $M \to M^*$
 is a submetry and $M^*$ is a 3-dimensional Alexandrov space with nonnegative curvature 
 (positive if $M$ has positive curvature). Unless otherwise explicitly stated we assume throughout that $M$ is
 a closed simply connected $4$-manifold of nonnegative sectional curvature. In \cite{Kl} and \cite{SY}
 it was shown that the Euler characteristic of $M$, $\chi(M) = \chi(M^{\S^1})$ is at most $4$
 ($3$ if $M$ has positive curvature, see \cite{HK}). Note that $M^{\S^1}$ is also naturally a subset of $M^*$.

The orbit space has non-empty boundary $\partial M^*$ if and only if $M^{\S^1}$ is 2-dimensional.
In this case, a 2-dimensional component of $M^{\S^1}$ is also a component of $\partial M^*$ and $M$ is
said to be \emph{fixed point homogeneous}. By the soul theorem for orbit spaces, $\partial M^*$ has at most
two components, and in the case of two components, $M^*$ is isometric to the product of an interval and
a boundary component. A complete classification of the possible actions in this fixed point homogeneous case
was done for positive curvature in \cite{GS}, and for nonnegative curvature in \cite{Ga} and \cite{GK}. 

\medskip

Since the claims in our main theorem have been proved already in the fixed point homogeneous case, 
we assume from now on that $M^*$ has no boundary, i.e., $M^{\S^1}$ consists of 2, 3 or 4 isolated points. 

The isotropy representation at an isolated fixed point, $p \in M^{\S^1}$ has the form 
$e^{i\theta} (z_1, z_2) = (e^{i k \theta} z_1, e^{i \ell \theta} z_2)$, where $T_pM$ has been identified
with $\C^2$, and $k \ge \ell \ge 1$ are relatively prime. In particular, the action on the unit sphere $\Sph^3$ of
$T_pM$ is either free, has one isotropy group $\Z_k$, or two isotropy group $\Z_k$ and $\Z_{\ell}$. The 
corresponding orbit space $\Sph^3/\S^1 =:\Sph_{k,\ell}$ is isometric to the space of directions at $p \in M^*$,
is a (singular) surface of revolution and topologically the 2-sphere. In particular $M^*$ is a topological manifold,
and since it is clearly simply connected, the Poincar\'{e} conjecture tells us that:

\begin{lem}
$M^*$ is the 3-sphere.
\end{lem}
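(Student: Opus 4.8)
The plan is to deduce this directly from the Poincar\'e conjecture, so essentially all the work lies in verifying that $M^*$ is a closed, simply connected, topological $3$-manifold without boundary.

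First I would record that $M^*$ is a topological $3$-manifold without boundary. By the discussion preceding the lemma, the tangent cone at a point $x\in M^*$ is the Euclidean cone over its space of directions $\Sigma_x$, and $\Sigma_x$ is homeomorphic to $\Sph^2$ in every case: a round $\Sph^2$ at a principal-orbit point, a surface of revolution $\Sph_{k,1}$ with a single cone point at a point on a singular non-fixed orbit, and $\Sph_{k,\ell}\cong\Sph^2$ at an isolated fixed point $p\in M^{\S^1}$ (we are in the case $\partial M^*=\emptyset$, so $M^{\S^1}$ is finite). Hence every tangent cone is homeomorphic to $\R^3$, and since $M$ is compact so is $M^*$; thus $M^*$ is a closed topological $3$-manifold.

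Next I would note that $M^*$ is simply connected. As $\S^1$ is connected its orbits are connected, so by the standard path-lifting property for quotient maps of connected compact group actions one lifts a loop in $M^*$ to a path in $M$ and closes it up inside a single orbit; this shows the orbit map $M\to M^*$ induces a surjection $\pi_1(M)\to\pi_1(M^*)$, and simple connectivity of $M$ then forces $\pi_1(M^*)=1$. Finally, the Poincar\'e conjecture identifies the closed simply connected topological $3$-manifold $M^*$ with $\Sph^3$, which is the assertion.

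I do not expect a genuine obstacle here beyond invoking the (now established) Poincar\'e conjecture as an input; the only place demanding care is checking that the isolated fixed points are honest manifold points of $M^*$, and this is exactly the identification of $\Sph_{k,\ell}$ with a topological $2$-sphere carried out just above.
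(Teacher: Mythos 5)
Your proposal is correct and follows exactly the paper's route: show $M^*$ is a closed simply connected topological $3$-manifold by observing that every space of directions is a topological $2$-sphere, then invoke the Poincar\'e conjecture. (One small notational slip: the space of directions at an interior point of a non-fixed singular stratum is $\Sph_n = \Sph^2(1)/\Z_n$, a spindle with two cone points, not $\Sph_{k,1}$ -- but this is still a topological $\Sph^2$, so the argument is unaffected.)
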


 Metrically it is important to note that

\begin{center}
$\Sph_{1,1} = \Sph^2(1/2)$ 
\end{center}

\no is the standard 2-sphere with radius 1/2, and 

\begin{equation}\label{space of direction}
\Sph_{1,1} \ge \Sph_{k,1} \ge \Sph_{k,\ell}
\end{equation}

\no in the sense that there are natural ($\S^1$ equivariant) distance decreasing maps to the smaller space.
The bound 4 (3 in positive curvature) on the number of fixed points is simply achieved by the observation that
there can be at most 4 (resp. 3) points in $M^*$ as singular as $\Sph^2(1/2)$ when the curvature in nonnegative
(resp. positive), cf. Lemma \ref{lem: four}. 

\medskip

A stratum in the orbit space $M^*$ corresponding to points with isotropy group $\Z_n$ forms a geodesic arc $\gamma$
whose closure joins two different isolated fixed points. The space of directions at a point
of $\gamma$ is isometric to the spherical suspension of a circle of length $2\pi/n$, i.e., to

\begin{center}
$\Sph^2(1)/\Z_n =: \Sph_n$
\end{center}

  Note that the spaces of directions with two singular points, i.e., $\Sph_{k,\ell}$ and $\Sph_n$, with $\ell, n \ge 2$,
  have natural two-fold  branched
   covers along the two singular points, denoted by $\Sph_{k/2,\ell/2}$ and $ \Sph_{n/2}$ respectively.
  

Also note that at most two geodesic strata corresponding to finite isotropy groups can end at a given fixed point,
then making a right angle.
The union of closures of two, three, or four such geodesic strata may form a closed curve $c$ in $M^*$.
Our main objective is to show that when this happens, such a closed curve in the 3-sphere $M^*$ is not knotted.
The following is a key observation

\begin{lem}[Branched Cover]\label{alex branching}
Suppose $c$ is a closed curve in $M^*$ formed by the closure of geodesic strata corresponding to finite isotropy groups. Then the two-fold branched cover $M^*_2(c)$ is an Alexandrov space with the same lower curvature bound as $M^*$. Moreover, the space of directions at any point $p \in c$ of $M^*_2(c)$ is the canonical two-fold branched cover of the corresponding space of directions of $M^*$. In particular, for any point $p\in c\cap M^{\gS^1}$ the space of directions satisfies $\Sigma_pM^*_2(c)\le \Sph^2(1/2)$. 
\end{lem}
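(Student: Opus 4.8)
The plan is to build $M^*_2(c)$ by hand as a metric space, to read off its spaces of directions along the branch locus from the local picture, and then to extract the curvature bound from the already-known structure of $M^*$ via the globalisation theorem for Alexandrov spaces. For the construction: $c$ is a tamely embedded circle in the $3$-sphere $M^*$, so $H_1(M^*\setminus c;\Z)\cong\Z$ and $M^*\setminus c$ has a distinguished connected double cover $N^\circ$ with deck involution $\iota$; I take $M^*_2(c)$ to be the metric completion of $N^\circ$ with the induced length metric. Routine verifications then give: the branch locus $\tilde c:=M^*_2(c)\setminus N^\circ$ maps homeomorphically onto $c$; the projection $\pi\colon M^*_2(c)\to M^*$ is $1$-Lipschitz and restricts to a local isometry over $M^*\setminus c$; $\iota$ extends to an isometric involution of $M^*_2(c)$ with $\Fix(\iota)=\tilde c$ and $M^*_2(c)/\iota=M^*$; and $M^*_2(c)$ is a complete, locally compact length space.

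Next I would identify the space of directions at a point $\tilde q\in\tilde c$, $q=\pi(\tilde q)$. Near $\tilde c$ the map $\pi$ is exactly a two-fold cover of the normal directions branched over the directions tangent to $c$, so $\Sigma_{\tilde q}M^*_2(c)$ is the canonical two-fold branched cover of $\Sigma_qM^*$ along the set of $c$-directions. If $q$ is an interior point of a stratum with isotropy $\Z_n$ (so $n\ge 2$), these are the two suspension points of $\Sigma_qM^*=\Sph_n$, whence $\Sigma_{\tilde q}M^*_2(c)=\Sph_{n/2}$, a spherical suspension of a circle of length $4\pi/n\le 2\pi$, which has curvature $\ge 1$. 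If $q=p\in M^{\gS^1}$, then --- and this is the one genuinely useful observation of the step --- since $p$ lies on the \emph{closed} curve $c$, both geodesic strata meeting at $p$ are singular, forcing both weights $k\ge\ell\ge 2$; the $c$-directions are then the two cone points of $\Sigma_pM^*=\Sph_{k,\ell}$, so $\Sigma_pM^*_2(c)=\Sph_{k/2,\ell/2}$. Passing to this branched cover doubles the two cone angles, to $4\pi/k$ and $4\pi/\ell$, both $\le 2\pi$, and leaves the rotationally invariant Gauss curvature unchanged, so $\Sph_{k/2,\ell/2}$ has curvature $\ge 1$; and since $k/2,\ell/2\ge 1$, the comparison maps of \eqref{space of direction} (which are defined for arbitrary real weights $\ge 1$) give $\Sph_{k/2,\ell/2}\le\Sph_{1,1}=\Sph^2(1/2)$. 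This is the ``in particular'' clause.

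For the curvature bound, by the globalisation theorem it is enough to exhibit, around every point of $M^*_2(c)$, a neighbourhood on which triangle comparison with the relevant constant holds. Over $M^*\setminus c$ this is immediate, since $\pi$ is a local isometry there. The remaining --- and only delicate --- point is local at $\tilde q\in\tilde c$: a small metric ball about $\tilde q$ is isometric to the two-fold branched cover of a small ball $B\subset M^*$ about $q$ branched along $c\cap B$, and one must show that this operation does not destroy the lower curvature bound. This is exactly where the hypothesis on $c$ enters: $c$ is an extremal subset of $M^*$ (a set of non-principal orbits), and $M^*$ has ``normal cone angle $\le\pi$'' along $c$ everywhere --- at an interior stratum point the normal circle has length $2\pi/n\le\pi$, and at a fixed point the required statement is precisely the computation above that the branched cover of $\Sph_{k,\ell}$ still has curvature $\ge 1$. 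Granting this local estimate, combining it with the local isometry over $M^*\setminus c$ and globalisation shows $M^*_2(c)$ is an Alexandrov space with the same lower curvature bound as $M^*$; the spaces-of-directions assertions were obtained above.

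I expect this last local estimate to be the main obstacle: proving that the two-fold branched cover of an Alexandrov space along an extremal curve with normal cone angle $\le\pi$ retains its lower curvature bound. I would approach it either through the theory of extremal subsets and quasigeodesics --- using the isometric involution $\iota$ to reflect across $\tilde c$ --- or by a direct triangle-comparison argument in which triangles meeting $\tilde c$ are unfolded by $\iota$ and pushed down into $M^*$. In either approach the genuinely fiddly case is the corner points $p\in c\cap M^{\gS^1}$, where two strata meet at a right angle, so that the ``normal cone angle'' is not a single number but is carried by the $2$-dimensional space of directions $\Sph_{k,\ell}$, and one must argue with the branched cover $\Sph_{k/2,\ell/2}$ directly. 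Everything else is bookkeeping with the local structure of $M^*$ recorded earlier in this section.
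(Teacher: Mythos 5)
Your identification of the spaces of directions along the branch locus, and the observation that both arcs of the closed curve $c$ at a fixed point carry nontrivial finite isotropy so that $k,\ell\ge 2$, are correct and agree with the paper. But the main assertion --- that $M^*_2(c)$ retains the lower curvature bound --- is not actually proved in your proposal: you say yourself that the local estimate at points of $c$ is ``the main obstacle,'' and you only sketch possible strategies. That is a genuine gap, not a detail: a direct proof that two-fold branched covering along a curve preserves a lower Alexandrov bound is delicate precisely at the corner points $p\in c\cap M^{\gS^1}$ and the higher-order strata, and neither ``unfolding by $\iota$'' nor a naive triangle argument resolves it without a new idea.

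The paper's proof takes a route that avoids this local estimate entirely, and the key fact you are missing is the standard one that along the interior of a minimal geodesic in an orbit space the isotropy group is constant. From this it follows first that $M^*-c$ is convex in $M^*$, and then that a minimal geodesic in $M^*_2(c)$ joining points off the branch locus can meet the branch locus only at points whose isotropy downstairs is exactly $\Z_2$ --- precisely where the branched cover is a smooth Riemannian manifold. Writing $c_2\subset c$ for the set of points with isotropy different from $\Z_2$ (this includes all fixed points and all $\Z_n$-strata with $n\ge 3$), one obtains that $M^*_2(c)-c_2$ is a convex open subset which is a genuine Riemannian manifold, locally isometric either to a double cover of $M^*-c$ or, near $c-c_2$, to the smooth desingularization of a $\Z_2$-orbifold; in either case the pointwise curvature bound of $M^*$ is inherited. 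Since every geodesic triangle in $M^*_2(c)$ is a limit of geodesic triangles contained in this convex set, Toponogov comparison passes to the limit. Your anticipated ``hard local case'' at fixed points and at $\Z_n$-points with $n\ge 3$ thus never has to be proved: generic minimal geodesics never visit those points.
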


\begin{proof}
We let $c_2\subset c$ denote the closed subset of those orbits whose isotropy group 
is not $\Z_2$. Notice that $M^*_2(c)$ is a smooth Riemannian manifold 
in a neighbourhood of $c-c_2$. 
It is well known that the isotropy group at interior points of a minimal geodesic between two orbits is the same as the isotropy group of the whole geodesic. In particular, the set of regular points of the orbit space corresponding to principal orbits is a convex set. For the same reason in our case, $M^* - c$ is convex. Thus it suffices to see that also $M^*_2(c) - c_2$ is convex in $M^*_2(c)$, since any geodesic triangle in $M^*_2(c)$ is the limit of geodesic triangles in $M^*_2(c) - c_2$. 

To prove the convexity claim above it suffices to see what the geodesics in $M^*_2(c)$ are. Here only those emanating at points in $c$ are an issue. By construction, however, it is clear that for each geodesic emanating at points of $c$ in $M^*$ there are exactly two emanating from the same point in $M^*_2(c)$. This shows that the spaces of directions at points along $c$ are as claimed in the lemma, and also shows that minimal geodesics between points of $M^*_2(c) -c$ can cross $c$ only at points with isotropy $\Z_2$ when viewed in $M^*$. 

Finally, 
if $k,\ell\ge 2$ are relatively prime, then it is easy to see $\Sph_{k/2,\ell/2}\le \Sph_{1,1} =\Sph^2(1/2)$.
\end{proof}

To use Theorem C we also need the following

\begin{lem}[Local Flatness]\label{flat}
A closed curve $c$ consisting of singular points of $M^*$ is a locally flat 1-dimensional submanifold of $M^*$.
\end{lem}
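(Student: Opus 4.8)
The plan is to check local flatness separately at the two kinds of points of $c$: interior points of a geodesic stratum, and the isolated fixed points at which two strata meet. In each case the slice theorem provides an explicit local model for $M^*$ near the orbit in question, and the essential point is that the same model also pins down how $c$ sits inside that neighbourhood, so that the pair $(M^*,c)$ is identified with the standard pair $(\R^3,\R)$.

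First suppose $x\in c$ lies in the interior of a stratum $\gamma$ with isotropy $\Z_n$, $n\ge 2$. Since $\S^1$ is abelian, $\Z_n$ acts trivially on $\S^1 x=\S^1/\Z_n$ and hence on the orbit direction, so (using that $M$ is orientable and $\S^1$ connected) $\Z_n$ acts orientation-preservingly on the $3$-dimensional normal space; combined with the recorded identification $\Sigma_xM^*=\Sph_n=\Sph^2(1)/\Z_n$ this forces the action on $\R^3$ to be the cyclic group of rotations by $2\pi/n$ about a line $A$. By the slice theorem a neighbourhood of $x$ in $M^*$ is the open cone $C(\Sph_n)=\R^3/\Z_n$, and $c$ near $x$ is exactly the image of $A$ (the stratum being the set of nearby orbits retaining isotropy $\Z_n$, i.e. those meeting $A$), that is, $\{0\}\times\R$ inside $(\R^2/\Z_n)\times\R$. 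Via the homeomorphism $\R^2/\Z_n\cong\R^2$ induced by $z\mapsto z^n$, this identifies $(M^*,c)$ near $x$ with $(\R^3,\{0\}\times\R)$, which is locally flat.

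Next suppose $p\in c\cap M^{\S^1}$, with isotropy weights $(k,\ell)$. Since $c$ is a closed curve through $p$ and at most two strata end at any fixed point, exactly two strata of $c$ meet at $p$; hence both coordinate axes of $\C^2=T_pM$ carry exceptional orbits, so $k\ge\ell\ge 2$ and $\gcd(k,\ell)=1$. By the slice theorem (equivariance of $\exp_p$) a neighbourhood of $p$ in $M^*$ is the open cone $C(\Sph_{k,\ell})=\C^2/\S^1$, and $c$ near $p$ is the image of $\{z_1=0\}\cup\{z_2=0\}$, which is precisely the cone over the two singular points $P_1,P_2$ of $\Sph_{k,\ell}$ (the images of the two exceptional orbits). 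As $\Sph_{k,\ell}$ is homeomorphic to $\Sph^2$, choose a homeomorphism $\Sph_{k,\ell}\to\Sph^2$ carrying $\{P_1,P_2\}$ to the two poles; coning it, and then using $C(\Sph^2)\cong\R^3$ via $(u,r)\mapsto ru$, gives $(C(\Sph_{k,\ell}),C(\{P_1,P_2\}))\cong(\R^3,\R)$. Hence $(M^*,c)$ is again a standard pair near $p$.

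Since every point of $c$ is of one of these two types, $c$ is a closed $1$-submanifold of $M^*$ which is locally flat at every point. The only mildly delicate step is the first one, recognising the $\Z_n$-action on the normal space as an honest rotation about an axis; this is forced by orientability of $M$ together with the already-established description $\Sigma_xM^*=\Sph_n$, and no deeper input — in particular no appeal to Perelman's conical neighbourhood theorem or to the Perelman--Petrunin theory of extremal subsets — is required.
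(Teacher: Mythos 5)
Your proof is correct and takes essentially the same route as the paper's: the slice theorem (equivariant exponential) identifies a small ball about a point of $c$ with the corresponding ball in the tangent cone, and the spaces of directions $\Sph_n$ and $\Sph_{k,\ell}$ (with $n,\ell\ge 2$) admit homeomorphisms to $\Sph^2$ carrying the two singular points to a pair of antipodes, which is precisely local flatness. You merely unwind the two cases (interior stratum point, isolated fixed point) separately and write out the explicit conical identifications, where the paper compresses both into a single sentence.
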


\begin{proof} We will denote the orbit of a point $p$ by $p^*\in M^*$.
It is immediate from the slice theorem that the exponential map at any $p \in M$ with $p^*\in c$ induces a
homeomorphism from a sufficiently small ball in the tangent cone at $p^*$ onto the corresponding ball 
centered at $p^*$. Since both $\Sph_{k,\ell}$, and $\Sph_n$ with $\ell, n\ge 2$ admit homeomorphisms 
to $\Sph^2(1)$ taking the pair singular points to a pair of antipodal points the claim follows. 
\end{proof}

We are now in position to prove our key result

\begin{thm}[Unknot]
Let $M$ be a simply connected nonnegatively curved 4-manifold with an isometric $\S^1$ action with isolated fixed points only. If $c$ is a circle in $M^*$ consisting of non-regular points, then $c$ is unknotted, there is at most one such curve, and all fixed points are on the curve forming a biangle, triangle, or quadrangle corresponding to 2, 3, or 4 fixed points.
\end{thm}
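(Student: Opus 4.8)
The plan is to use the branched-cover construction together with Theorem \ref{knot} to rule out knotting, and then a counting argument to pin down the global picture.

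First I would set up the dichotomy for a single curve $c$. By \lref{flat}, $c$ is a locally flat embedded circle in the 3-sphere $M^*$, so Theorem \ref{knot} applies: either $c$ is unknotted, or $\pi_1(\Sph^3_2(c))$ is infinite or has order $\ge 3$. To exclude the second alternative I would invoke \lref{alex branching}: the two-fold branched cover $M^*_2(c)$ is a nonnegatively curved (simply connected, by van Kampen, since $M^*$ and the pieces are simply connected) Alexandrov 3-space, and at every fixed point $p\in c$ the space of directions satisfies $\Sigma_p M^*_2(c)\le \Sph^2(1/2)$. Since there are at least two fixed points on $c$ (every curve of non-regular points joins isolated fixed points by the stratum description in Section 2), $M^*_2(c)$ contains at least two points as singular as $\Sph^2(1/2)$. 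By the soul-type argument already cited in the text (cf. \lref{lem: four}), a nonnegatively curved Alexandrov 3-space without boundary can contain at most four such points, and more to the point $M^*_2(c)$ is a manifold away from $c_2$ and simply connected; the equivariant Poincar\'e input behind Theorem \ref{knot} then forces $\Sph^3_2(c)$ to be $\Sph^3$ unless $c$ is knotted — but $M^*_2(c)$ being a nonnegatively curved simply connected Alexandrov manifold is exactly the obstruction: if $c$ were knotted, $\pi_1$ of the branched cover would be infinite or of order $\ge 3$, contradicting simple connectivity. Hence $c$ is unknotted.

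Next I would argue there is at most one such curve and that all fixed points lie on it. Since at most two geodesic strata with finite isotropy end at a given fixed point (meeting at a right angle, as noted in Section 2), the union of all such strata is a disjoint union of embedded arcs and circles whose endpoints are fixed points. A fixed point $p$ has space of directions $\Sph_{k,\ell}$; if $k>1$ or $\ell>1$ then $p$ is the endpoint of one or two singular arcs, and if $k=\ell=1$ then $p$ is as singular as $\Sph^2(1/2)$ and lies on no finite-isotropy stratum. I would show the latter cannot happen together with $\chi(M)\le 4$: if some fixed point is isolated from $c$, count — there are at most $4$ fixed points total, each as singular as at most $\Sph^2(1/2)$, and the metric comparison \eqref{space of direction} together with the branched-cover bound forces enough "singular budget" along $c$ that a stray $\Sph_{1,1}$-point overshoots the allowed four, giving a contradiction. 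So every fixed point lies on $c$, the strata close up into a single polygon, and with $2$, $3$, or $4$ fixed points this is a biangle, triangle, or quadrangle. Uniqueness of $c$ follows since all fixed points are already used.

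The main obstacle I expect is the counting step that simultaneously bounds the number of curves, forces all fixed points onto $c$, and rules out extra circles of non-regular points not passing through any fixed point — this requires carefully combining the global constraint $\chi(M)=\chi(M^{\S^1})\le 4$ with the local structure of the strata and the metric inequalities \eqref{space of direction}, rather than any single slick argument. The unknottedness itself, by contrast, is essentially immediate once \lref{alex branching} and Theorem \ref{knot} are in hand.
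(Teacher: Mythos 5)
There is a genuine gap in your unknotting step. You assert that $M^*_2(c)$ is simply connected ``by van Kampen, since $M^*$ and the pieces are simply connected,'' and then derive a contradiction with \tref{knot}. But that simple-connectivity claim is false and in fact begs the question. A two-fold branched cover of $\Sph^3$ over a knot is in general \emph{not} simply connected: over the trefoil, for instance, it is the lens space $L(3,1)$. Van Kampen applied to the complement of $c$ and a tubular neighborhood gives a presentation of $\pi_1(M^*_2(c))$ governed by the knot group of $c$, not by $\pi_1(\Sph^3)$. Theorem~\ref{knot} (together with Poincar\'e) says precisely that $M^*_2(c)$ is simply connected \emph{if and only if} $c$ is unknotted, which is what you are trying to prove; you cannot assume it.

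The paper closes this by passing to the \emph{universal cover} of $M^*_2(c)$ and counting. By \lref{alex branching}, $M^*_2(c)$ is a nonnegatively curved Alexandrov $3$-space in which each fixed point of $\S^1$ lying on $c$ still has space of directions $\le \Sph^2(1/2)$, and each fixed point off $c$ gives rise to two such points. If $c$ were knotted, \tref{knot} would force $\pi_1(M^*_2(c))$ to be infinite or of order $\ge 3$; its universal cover is again a nonnegatively curved Alexandrov $3$-space, and since the singular points are isolated they lift, so the count of $\Sph^2(1/2)$-singular points is at least tripled. With at least two such points on $c$ this gives at least $6$, contradicting \lref{lem: four}. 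That counting mechanism, not a simple-connectivity assertion, is how \tref{knot} is actually used.

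Your second paragraph (uniqueness of $c$ and all fixed points lying on it) also cannot close as sketched, because the count does not automatically overshoot. If $\chi(M)=3$, with two fixed points on a biangle $c$ and a third isolated fixed point off $c$, then in $M^*_2(c)$ you get $2 + 2\cdot 1 = 4$ points as singular as $\Sph^2(1/2)$, which exactly saturates \lref{lem: four} with no contradiction. The paper rules this configuration out by a separate topological input, citing \cite{Fi}, Lemma 5.1, that no circle action on $\CP^2$ has an orbit space of this type. Without that (or an equivalent), the exceptional case survives the Alexandrov count and your argument is incomplete.
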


We also recall the following 

\begin{lem}\label{lem: four} A three dimensional nonnegatively curved Alexandrov space $A$ has at most four points 
for which the space of directions is not larger than $\Sph^2(1/2)$.
\end{lem}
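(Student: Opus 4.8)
The plan is to realise the points $p_1,\dots,p_N$ with $\Sigma_{p_i}A\le\Sph^2(1/2)$ as the vertices of a convex subset of $A$ and to run a Gauss--Bonnet argument on its at most two-dimensional boundary.

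I would first isolate the two consequences of the hypothesis that the argument needs. (i) Since $\Sph^2(1/2)$ has diameter $\pi/2$, the inequality $\Sigma_{p_i}A\le\Sph^2(1/2)$ forces $\diam(\Sigma_{p_i}A)\le\pi/2$; as the two directions of a minimal geodesic at an interior point make angle $\pi$, no minimal geodesic of $A$ passes through $p_i$ in its interior, so $p_i$ is an extreme point of every convex subset of $A$ containing it. (ii) The boundary of a proper convex subset $C$ of the curvature $\ge 1$ surface $\Sigma_{p_i}A$ has length at most $\pi$; morally $C$ can occupy ``at most a hemisphere'' of $\Sph^2(1/2)$, whose boundary great circle has length exactly $\pi$. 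It is in (ii), and only there, that $\le\Sph^2(1/2)$ is used in its full strength rather than merely through $\diam\le\pi/2$.

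Granting (i) and (ii), let $H\subseteq A$ be the convex hull of $\{p_1,\dots,p_N\}$ (reducing first to the case where $H$ is compact, e.g.\ $A$ compact). Then $H$ is a compact nonnegatively curved Alexandrov space, each $p_i$ is a vertex of $H$ on $\partial H$ by (i), and $H$ is contractible (the distance function to an interior point has no critical point other than that point, so its gradient flow contracts $H$). I would argue by cases on $d=\dim H$. If $d\le 1$ then $H$ is a point or a segment, so $N\le 2$. If $d=2$, then $H$ is a convex polygonal disk and at $p_i$ the inner angle $\theta_i$ is the length of the geodesic segment $\Sigma_{p_i}H\subseteq\Sigma_{p_i}A$, so $\theta_i\le\diam(\Sigma_{p_i}A)\le\pi/2$; Gauss--Bonnet for the disk $H$, namely $\int_HK+\int_{\partial H}\kappa_g+\sum_{\mathrm{vertices}}(\pi-\theta_j)=2\pi$, together with $K\ge 0$ and $\kappa_g\ge 0$, yields $\sum_{i=1}^N(\pi-\theta_i)\le 2\pi$, whence $N\le 4$. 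If $d=3$, then $\partial H$ is a closed nonnegatively curved Alexandrov surface with $\chi(\partial H)=2\chi(H)=2$, and it has a conical point at each $p_i$ of cone angle $\omega_i=\operatorname{length}\bigl(\partial(\Sigma_{p_i}H)\bigr)$, where $\Sigma_{p_i}H$ is a proper convex subset of $\Sigma_{p_i}A$; by (ii), $\omega_i\le\pi$, so Gauss--Bonnet for $\partial H$ gives $\sum_{i=1}^N(2\pi-\omega_i)\le 2\pi\,\chi(\partial H)=4\pi$, hence $N\pi\le 4\pi$ and $N\le 4$. In every case $N\le 4$. The identical bookkeeping, using that positive ambient curvature makes the smooth part of $\partial H$ (resp.\ of $H$) contribute a strictly positive amount, lowers the bound to $3$ and recovers the positive-curvature statement.

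The main obstacle is step (ii): the bound $\operatorname{length}(\partial C)\le\pi$ for a proper convex subset $C$ of a curvature $\ge 1$ surface dominated by $\Sph^2(1/2)$. When $\Sigma_{p_i}A=\Sph^2(1/2)$ this is the classical fact that such a $C$ lies in a closed hemisphere combined with monotonicity of perimeter under inclusion of convex sets; the difficulty is that $\Sigma_{p_i}A$ may be a strictly smaller curvature $\ge 1$ surface (for instance a ``football'' $\Sph^2(1/2)/\Z_k$), for which one cannot simply pass to a hemisphere and must bound the perimeter directly --- via an integral-geometric (Crofton) estimate or via the distance-nonincreasing map from $\Sph^2(1/2)$. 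Also to be checked are that $\Sigma_{p_i}H$ is genuinely a proper convex subset of $\Sigma_{p_i}A$ with a well-defined boundary length, and that $H$ behaves as a compact contractible body. Everything else is the Gauss--Bonnet accounting above. This lemma is the Alexandrov-geometric counterpart of the bounds $\chi(M)=\#M^{\S^1}\le 4$ (resp.\ $\le 3$ in positive curvature) recalled in the introduction, since an isolated $\S^1$-fixed point of a $4$-manifold has orbit-space space of directions $\Sph_{k,\ell}\le\Sph_{1,1}=\Sph^2(1/2)$.
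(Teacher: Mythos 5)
Your approach is genuinely different from the paper's. The paper argues by contradiction from five points $p_1,\dots,p_5$: choosing a minimal geodesic for each pair gives ten triangles, whose thirty angles sum to at least $10\pi$; since each $\Sigma_{p_i}A\le\Sph^2(1/2)$ forces the perimeter of any triple of directions to be at most $\pi$ (so the six angles at each $p_i$ sum to at most $2\pi$), equality holds everywhere, and the equality analysis forces exactly one right angle per triangle, one pair of orthogonal directions per vertex. Picking $p_1,p_2$ at minimal mutual distance and invoking equality in Toponogov then produces a Pythagorean identity $d(p_1,p_2)^2=d(p_1,p_i)^2+d(p_2,p_i)^2$ contradicting minimality. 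This is a purely local/combinatorial argument that never leaves the configuration of the five points and their joining geodesics. Your proposal instead builds a global convex body $H$ and runs Gauss--Bonnet on $H$ or $\partial H$, which is an appealing idea and would, if fully justified, yield the bound $4$ directly (rather than by contradiction) and make the role of $\chi=2$ transparent.

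There are, however, several places where your outline requires nontrivial input that the paper's argument avoids. The gap you flag yourself in step (ii) is real: that a \emph{proper closed convex subset} of a $2$-dimensional Alexandrov space of curvature $\ge 4$ (not just of $\Sph^2(1/2)$ itself) has perimeter at most $\pi$ is believable but not a textbook fact, and $\Sigma_{p_i}A$ need not be a football quotient of $\Sph^2(1/2)$ --- in a general $3$-dimensional Alexandrov space it can even be $\RP^2(1/2)$, where the hemisphere trick has no analogue. Beyond (ii): the convex hull $H$ of finitely many points in a non-compact nonnegatively curved Alexandrov space need not be compact, and even when $A$ is closed you must rule out $H=A$ (in which case $\partial H=\varnothing$ and the accounting collapses); in the $\dim H=3$ case you need $\partial H$, with its intrinsic metric, to be a closed Alexandrov surface of nonnegative curvature with conical singularities of angle $\omega_i=\operatorname{length}(\partial\Sigma_{p_i}H)$ at the $p_i$, and this inheritance of the curvature bound by the boundary of a convex body is itself a delicate point in Alexandrov geometry; and you must justify that $\Sigma_{p_i}H$ is a proper convex subset of $\Sigma_{p_i}A$ with a well-defined boundary length. (Your contractibility claim for $H$ can be dropped: since each $p_i$ is an extreme point by (i), as soon as $N\ge 1$ the surface $\partial H$ carries a cone point with $\omega_i\le\pi$, and Gauss--Bonnet already forces $\chi(\partial H)>0$, hence $\chi(\partial H)=2$.) By contrast, the paper's proof uses only triangle comparison in $A$ and in $\Sigma_{p_1}A$, plus the rigidity case of Toponogov, with no appeal to convex hulls, the structure of $\partial H$, or Gauss--Bonnet. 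Your plan is a reasonable sketch, but filling steps (i)--(ii) and the boundary-surface structure would take more machinery than the one-page argument in the paper.
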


\begin{proof} Suppose, on the contrary, we can find $5$ distinct points $p_1,\ldots, p_5$
with $\Sigma_{p_i}A\le \Sph^2(1/2)$.
 Choose for any pair of points $\{p_i,p_j\}$ in $\{p_1,\ldots,p_5\}$ a minimal geodesic
$c_{ij}$ and consider the corresponding $10$ geodesic triangles. 
The sum of the 30 angles adds up to at least $10\pi$.
 
We may assume after reordering that the sum of the $6$ angles
based at $p_1$ is at least $2\pi$. 
Next consider the initial 
directions of the chosen  minimal geodesics based at $p_1$. 
They are four distinct points $v_1,\ldots,v_4$ in the space of directions $\Sigma_{p_1}A$. 

Using that $\Sigma_{p_1}A\le \Sph^2(1/2)$
it is clear that the distances of any three points in $\{v_1,\ldots,v_4\}$ 
 add up to at most $\pi$. 
This of course shows that the sum of the $6$ angles is indeed equal to 
$2\pi$. From the equality discussion it is now easy to deduce 
that after possibly renumbering $v_1,\ldots,v_4$
we can assume $d(v_1,v_2)=d(v_3,v_4)=\pi/2$.  
Moreover, it is clear that  the point $p_1$ was arbitrary. 
Thus there are exactly $10$ right angles
and in each of the $10$ triangles in $A$ exactly one angle equals $\pi/2$. 

We may assume now that $d(p_1,p_2)=\min_{i\neq j}\{d(p_i,p_j)\}$. 
We choose $p_i\in \{p_3,p_4,p_5\}$ such that neither the angle between 
$c_{12}$ and $c_{1i}$ nor angle between $c_{12}$ and $c_{i2}$ 
is equal to $\pi/2$. 
This implies that the angle between $c_{1i}$ and $c_{2i}$ based at $p_i$
equals  $\pi/2$. 
Because of equality in Toponogov's theorem, 
$d(p_1,p_2)^2=d(p_1,p_i)^2+d(p_2,p_i)^2$ -- contradicting our above choice.
\end{proof}

\begin{proof} [Proof of the Unknot Theorem]

As we know $M$ has Euler characteristic 2, 3 or 4 corresponding to 2, 3, or 4 isolated fixed points of $\S^1$. Since the points of $c$ that correspond to fixed points of $\S^1$ remain more singular than $\Sph^2(1/2)$ according to our description above, and those possibly outside $c$ keep their size but double in numbers when we pass to the two-fold branched cover $M^*_2(c)$, we know that there are at most 4 (resp. 3 in positive curvature) such points also in $M^*_2(c)$ as well as in the universal cover of 
$M^*_2(c)$. If $c$ where knotted we could use
Theorem C to see that the number of singular points would at least triple if we pass to the universal cover. 

Thus $c$ is unknotted.
Moreover, 
applying Lemma~\ref{lem: four} to  $M^*_2(c)$ we deduce
that there is at most one such curve $c$, and all the fixed points are on this curve $c$, 
except possibly when there are three fixed points, the curvature is only nonnegative
and the singularities of $M^*$ are given by one isolated fixed point 
and two fixed points contained in a singular (unknotted) biangle $c$.
 It is a simple consequence of \cite{Fi}, Lemma 5.1 that no circle action on $\CP^2$ has an orbit space of this type.
\end{proof}

By Theorem 7.1 of Fintushel, \cite{Fi}  Theorem B  and hence also Theorem A when $M$ is simply connected follows.

In the next section we will give a direct geometric proof adapted to our assumptions, which also yields the extension to a $\T^2$ action when the Euler characteristic is four.


\section{Double Disc Bundle Decomposition}

In this section we will use the Poincar\'{e} conjecture to analyze $M$ with its circle action further.
Specifically we will use the fact that $M^*$ is the 3-sphere, and the Unknot Theorem of the previous section
to decompose $M^*$ in a specific way into two 3-discs respecting the strata, which in turn will yield a decomposition
of $M$ into two invariant disc bundles over points and or 2-spheres.
We assume that $M$ and hence $M^*=M/\gS^1$ is simply connected and nonnegatively curved, but see Remark~\ref{rem: rp4} below.

Consider the orbit space $M^*$. By the unknot theorem, the singular set in $M^*$ either forms a closed unknotted curve $c$, 
or we can extend the singular set to a closed embedded unknotted curve also denoted by $c$, such that any two arcs of $c$
make the maximal angle $\pi/2$ at each fixed point, and are geodesics near each fixed point. (We chose such a extension
only to make all arguments uniform). Note that the inverse image of each arc $\sigma$ of $c$ joining two fixed points
form a smooth invariant 2-sphere $\Sigma$ in $M$, actually the fixed point set of a finite isotropy group
(or a component thereof)
if the arc is a geodesic strata. To make it clear, $c$ is a right angled biangle, triangle or
quadrilangle/rectangle corresponding to the action having 2, 3 or 4 fixed points respectively.

The decomposition is now achieved as follows. Let $A$ and $B$ be invariant smooth ``dual" submanifolds
of $M$ corresponding either to the two fixed points when $c$ is a biangle,
one fixed point and the inverse image of the opposite edge when $c$ is a triangle, and the inverse image
of two opposite edges when $c$ is a rectangle. In a small $\epsilon$ neighborhood $U$ of the inverse image $C$
of $c$ we construct a smooth $\S^1$-invariant horizontal vector field $V$ which is ``normally radial" near $A$ and $B$
and tangential to the inverse image of the remaining two edges of $c$. This descends to a ``smooth" 
vector field $\bar V$ in the $\epsilon$ neighborhood $U^*$ of $c$ in $M^*$  which is ``normal" near the image
of the boundaries of the tubular neighborhoods of $A$ and $B$ and for which the remaining two edges of $c$
are integral curves. Using the fact that the $\epsilon$ neighborhoods of the images of $A$ and $B$
are 3-balls as are their complements, and the fact that $c$ is unknotted, it follows that $\bar V$ can be 
extended to a smooth nonvanishing vector field on the complement of $U^*$ 
respecting this ball decomposition of $M^*$. The extension of $\bar V$ obviously uniquely lifts to
an invariant extension of $V$ providing the desired decomposition of $M$ into tubular neighborhoods of $A$ and of $B$.

When $A$ is a point this immediately yields a proof of Theorem A.

 In the remaining cases $\chi(M) = 4$, $c$ is a ``rectangle", and $A$ and $B$ are 2-spheres.
 It is easy to see that the vector field $\bar V $ on $M^*$ can be chosen so that 
 the flow lines emanating from each point of one edge will meet at a point of the other edge
 to form a 2-sphere unless the points are vertices of the rectangle,
  in which case there is only one flow line. There is an $\S^1$ action on
  $M^*$ preserving these spheres with orbit space a 2-dimensional rectangle. This action clearly
  lifts to an action on $M$ whose orbits near $A$ and $B$ are the normal
   circles in a tubular neighborhood. It follows that this lift commutes with the given isometric $\S^1$ on $M$. 
 
 \begin{rem}\label{rem: rp4}
Note that if $M$ in Theorem A is not simply connected it has fundamental group $\Z_2$ by the Synge Theorem.
Consider the lifted $\S^1$ action on the universal cover $\tilde M$. It follows that $M$ has
Euler characteristic $1$, and we conclude that this action on $\tilde M$
is either the linear action on $\Sph^2 * \Sph^1$ with fixed point set $\Sph^2$
(fixed point homogeneous), or the suspension of a linear almost free action on $\Sph^3$ (as above).
In the first case our claim follows directly as in \cite{GS}. In the second case,
the covering group $\Z_2$ interchanges the two fixed points and preserves a 3-sphere in $\tilde M = \Sph^4$ 
invariant under the $\S^1$ action as well. From the equivariant Poincar\'{e} conjecture it follows that
indeed $\Z_2$ acts as the antipodal map and we are done.
\end{rem}


\section{Rigidity for Euler Characteristic Four}
The main aim of this section is to prove the following equality discussion in Kleiner's estimate of the Euler characteristic of a nonnegatively curved $4$-manifold 
with circle symmetry.
\begin{thm}\label{thm: rigid}\label{thm: euler 4} Let $(M,g)$ be a nonnegatively curved $4$-manifold of Euler characteristic four 
endowed with an isometric $\S^1$ action which has only isolated fixed points.
Then one of the following holds \begin{enumerate}
\item[a)] There is a totally geodesic flat torus $T^2$ which is horizontal with respect 
to the $\S^1$-action and projects to an embedded $2$-sphere $\Sph^2\subset M^*$
endowed with a flat orbifold metric. 
\item[b)] There are two closed intervals 
$I,J\subset \R$ and a submetry $\sigma\colon M^4\rightarrow I\times J$. 
The fibers of $\sigma$ are given by the orbits of a (not necessarily isometric)  $T^2$ action
that extends the given $\S^1$-action.
\end{enumerate}
Moreover b) holds if one of the following is true 
\begin{enumerate}
 \item[(i)] In  $M^*$  there are two fixed points which can be connected by more than one minimal geodesic.  
\item[(ii)] There is one fixed point whose space of direction in $M^*$ has two orbifold 
singularities. 
\end{enumerate}

\end{thm}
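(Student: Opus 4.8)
The plan is to run the packing argument behind Kleiner's bound $\chi(M)\le 4$ in the equality case $\chi(M)=4$. By the Unknot Theorem $M^{*}=\Sph^{3}$, and the four fixed points $p_{1},\dots ,p_{4}$, each with $\Sigma_{p_{i}}M^{*}\le \Sph^{2}(1/2)$, are the vertices of an unknotted right-angled ``rectangle'' of singular strata. Fix minimal geodesics $c_{ij}$ for the six pairs $\{p_{i},p_{j}\}$ and consider the four geodesic triangles on triples. Toponogov's theorem gives angle sum $\ge \pi$ in each triangle, so the twelve angles sum to $\ge 4\pi$; on the other hand the three angles based at a fixed $p_{i}$ are the three side lengths of the geodesic triangle of initial directions inside $\Sigma_{p_{i}}M^{*}\le \Sph^{2}(1/2)$, whose perimeter is at most $\pi$ (as used already in the proof of Lemma~\ref{lem: four}), so the twelve angles sum to $\le 4\pi$. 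Hence equality throughout: each of the four triangles has angle sum exactly $\pi$ and therefore, by rigidity in Toponogov's theorem, bounds a totally geodesic flat triangular disc in $M^{*}$; and at each $p_{i}$ the three directions to the other vertices lie on a closed geodesic of length $\pi$ in $\Sigma_{p_{i}}M^{*}$, with any two singular directions among them at distance $\pi/2$. Gluing the four flat discs along the six edges yields an embedded $2$-sphere $S\subset M^{*}$ (the discs overlap only along their common edges), flat off the four $p_{i}$ and with cone angle exactly $\pi$ at each of them; that is, $S$ carries a flat pillowcase orbifold metric.

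Next I would lift this picture to $M$. Each flat disc is convex in $M^{*}$, so its preimage is a convex, hence nonnegatively curved, Alexandrov $3$-manifold fibering by $\S^{1}$-orbits over a flat triangle, and over the interior a principal $\S^{1}$-bundle with the O'Neill connection. Using $\sec_{M}\ge 0$ together with flatness and total geodesy of the base and O'Neill's curvature formula, I would argue this bundle is metrically a flat (possibly twisted) product, so each flat disc has horizontal totally geodesic flat lifts; gluing the lifts over the six edges, the only remaining freedom is the holonomy of the $\S^{1}$-orbit bundle around the four cone points of $S$, a rotation determined by the isotropy of the adjacent strata. If this holonomy generates a finite group, the horizontal lift closes up into a flat totally geodesic torus which meets each orbit over $S$ in two points and passes through the four fixed points, so that the induced $T^{2}\to S$ is the orbifold double cover --- this is case (a). If the holonomy is infinite, the lifts do not close up, and one then shows that the orbits over $S$ together with the canonical invariant horizontal field built in Section~3 organize $M$ into a one-parameter family of ``parallel'' pillowcases whose associated orbit-plus-flow tori are the fibres of a submetry onto a rectangle $I\times J$, with the extending (possibly non-isometric) $T^{2}$ action the one already produced in Section~3 --- this is case (b).

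Finally, (i) and (ii) each force (b). If two fixed points are joined by two distinct minimal geodesics, then $\sec_{M}\ge 0$ and rigidity in Toponogov force a flat totally geodesic lune between them, whose preimage in $M$ is a flat totally geodesic solid region foliated by flat tori, directly exhibiting the submetry onto $I\times J$. If instead $\Sigma_{p_{i}}M^{*}$ has two orbifold points, the two singular directions at $p_{i}$ are genuine cone points at distance $\pi/2$ on the length-$\pi$ closed geodesic found above, the isotropy along both strata at $p_{i}$ has order $\ge 2$, and the holonomy of the $\S^{1}$-orbit bundle around the corresponding cone point of $S$ is then of infinite order (equivalently, the closing condition imposed at $p_{i}$ is incompatible with the ones at the remaining vertices), so the lift cannot close up and we are again in case (b).

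The crux is the middle paragraph: (1) the O'Neill-type rigidity that a flat totally geodesic disc of regular points in $M^{*}$ lifts to a flat totally geodesic horizontal region of $M$ --- where $\sec_{M}\ge 0$ enters essentially and where care is needed near the singular strata along $S$ --- and (2) the holonomy bookkeeping around the four cone points of $S$, identifying finiteness of this holonomy as exactly the dividing line between (a) and (b). By comparison, the equality discussion of the first paragraph is a delicate but essentially routine refinement of the proof of Lemma~\ref{lem: four}.
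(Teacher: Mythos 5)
Your first paragraph is essentially the content of the paper's Lemma~\ref{lem: euler 4} and is on target: summing the twelve angles over the four triples of fixed points, using Toponogov below and $\Sigma_{p_i}M^*\le\Sph^2(1/2)$ above, forces equality everywhere, so that each triple has angle sum exactly $\pi$ and at each $p_i$ the three initial directions lie on a closed geodesic of length $\pi$. The pillowcase picture you build from the four flat triangles is also exactly what the paper does in case a). So far so good.

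The gap is in the case dichotomy, and it is not a technical one --- it is the heart of the theorem. You propose to build $S$ in all cases and then decide between a) and b) by whether a ``holonomy'' you do not define is finite or infinite. This does not work as stated. First, in the rectangular case the diagonal geodesics (e.g.\ $c_{13}$ and $c_{24}$) are not unique --- indeed Lemma~\ref{lem: euler 4}~b) shows the initial directions of minimal geodesics from $p_2$ to $p_4$ form a whole circle --- so there is no canonical flat filling of the four triangles and no well-defined $S$; the one-parameter family of such fillings is precisely the $I\times J$ parameter space of case b). Second, the four singular points of $S$ project to fixed points of $\S^1$, where the orbit is a single point, so there is no $\S^1$-holonomy around them in the naive sense; you have not said what data you intend to measure. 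The paper's actual dividing line is sharp and elementary: by Lemma~\ref{lem: euler 4}, non-uniqueness of \emph{some} $c_{ij}$ is equivalent to the presence of a right angle at some fixed point, in which case \emph{all} four corner angles are right, the configuration is a unique rectangle with free diagonals, geodesics leaving one side orthogonally hit the opposite side after the same time, and $p\mapsto (d(E_1,p),d(E_2,p))$ is a submetry --- case b). Only when \emph{all} six geodesics are unique and no right angle occurs do the four triangles fill uniquely into a pillowcase $\Sph^2$ whose horizontal lift closes up to a flat torus --- case a). Without this criterion, your treatment of (i) and (ii) does not land: (i) is by definition the non-uniqueness hypothesis and hence lands in case b) immediately (your ``flat lune from Toponogov rigidity'' claim actually needs the angle count, not just two geodesics --- a round sphere has many minimizers between antipodes and no flat lune); and for (ii), the paper observes that the two orbifold points of $\Sigma_p M^*=\Sph_{k,\ell}$ must themselves appear among $\{v_1,v_2,v_3\}$ (again by the angle-sum equality) and are at mutual distance $\pi/2$, so a right angle occurs --- no holonomy bookkeeping required. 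As written, your middle and final paragraphs do not constitute a proof; they flag the needed O'Neill-type rigidity and the holonomy dichotomy as the ``crux,'' which they are, and the paper settles them differently and more directly through uniqueness-vs-right-angle.
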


For the proof we need
\begin{lem}\label{lem: euler 4} Let $p_1,p_2,p_3,p_4$ be the four isolated fixed points 
and let $c_{ij}$ be a minimal geodesic between $p_i$ and $p_j$.   \begin{enumerate}
\item[a)] If the minimal geodesic in $M^*$ between $p_1,p_3$ 
is not unique then the minimal geodesics 
$c_{12}$, $c_{14}$ $c_{23}$, $c_{34}$ are unique and 
$\angle (c_{12},c_{14})=\angle (c_{34},c_{23})=\pi/2$. 
\item[b)] If $\angle (c_{12},c_{14})=\pi/2$ then $c_{24}$ is not unique. 
In fact
the initial directions of all minimal geodesics from $p_2$ to $p_4$ 
form a circle.\end{enumerate}
\end{lem}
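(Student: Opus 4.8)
The backbone is a double count of the angles of the four triangles $p_ip_jp_k$, entirely parallel to the proof of \lref{lem: four}. By Toponogov's theorem each such triangle in the nonnegatively curved Alexandrov space $M^*$ has angle sum $\ge\pi$, so the twelve angles of the four triangles add up to at least $4\pi$. On the other hand, at a vertex $p_i$ the three initial directions of $c_{ij},c_{ik},c_{il}$ lie in $\Sigma_{p_i}M^*\le\Sph^2(1/2)$, and any three points of $\Sph^2(1/2)$ have pairwise distances summing to at most $\pi$ (the perimeter of a geodesic triangle on a sphere of radius $r$ is at most $2\pi r$); as in \lref{lem: four} this shows the three angles at $p_i$ sum to at most $\pi$, so the twelve angles add up to at most $4\pi$. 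Hence equality holds throughout: every triangle $p_ip_jp_k$ has angle sum exactly $\pi$, and therefore, by the rigidity case of Toponogov's theorem, bounds a totally geodesic flat region isometric to its Euclidean comparison triangle; and at every vertex $p_i$ equality holds in the perimeter estimate, so the three directions there are isometric to three points on a great circle of $\Sph^2(1/2)$, in cyclic order with arc lengths equal to the three angles.

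\emph{Part a).} Suppose there is a second minimal geodesic $\sigma'$ from $p_1$ to $p_3$; since minimal geodesics do not branch, its initial direction at $p_1$ differs from that of $c_{13}$. Repeating the double count with $\sigma'$ in place of $c_{13}$ shows $p_1p_2p_3$ and $p_1p_3p_4$ are again flat, and with the same Euclidean comparison triangles (the side lengths do not change, as $|\sigma'|=|c_{13}|$); hence the two directions toward $p_3$ make the same angle $\alpha$ with the direction of $c_{12}$ and the same angle $\beta$ with the direction of $c_{14}$, while the equality at $p_1$ gives $\angle(c_{12},c_{14})=\pi-\alpha-\beta$. Since $\angle(c_{12},c_{14})\le\diam\Sph^2(1/2)=\pi/2$ we have $\alpha+\beta\ge\pi/2$, and if $\alpha+\beta>\pi/2$ the equality discussion forces a contradiction: the directions of $c_{12},c_{14}$ would then be non-antipodal in $\Sph^2(1/2)$, so the great circle through them is unique, which pins down the position of each of the two directions toward $p_3$ on that circle and makes them coincide. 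Therefore $\alpha+\beta=\pi/2$, i.e. $\angle(c_{12},c_{14})=\pi/2$; the symmetric argument at $p_3$ gives $\angle(c_{34},c_{23})=\pi/2$. Finally, if one of the four sides, say $c_{12}$, were not unique, then applying the same right-angle conclusion to the quadrilateral having $c_{12}$ as a diagonal gives $\angle(c_{13},c_{12})+\angle(c_{14},c_{12})=\pi/2$; but $\angle(c_{14},c_{12})=\pi/2$ by what was just proved, so $\angle(c_{12},c_{13})=0$, i.e. $c_{12}$ and $c_{13}$ share an initial direction at $p_1$ and hence $c_{13}$ passes through $p_2$ in its interior --- impossible, since that would give two directions at distance $\pi$ in $\Sigma_{p_2}M^*\le\Sph^2(1/2)$, which has diameter $\pi/2$. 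Thus all four sides are unique.

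\emph{Part b).} Assume $\angle(c_{12},c_{14})=\pi/2$. Then the directions $u_{12},u_{14}$ of $c_{12},c_{14}$ lie at the maximal distance $\pi/2$ in $\Sigma_{p_1}M^*$, and by the cyclic-order statement above the direction of $c_{13}$ lies on a minimal geodesic of length $\pi/2$ from $u_{12}$ to $u_{14}$; equivalently, $c_{13}$ initially runs inside the totally geodesic flat right triangle $T:=p_1p_2p_4$, whose tangent cone at $p_1$ is the flat $\pi/2$-sector on $u_{12},u_{14}$. Being at maximal distance, $u_{12}$ and $u_{14}$ are joined by a circle of minimal geodesics in $\Sigma_{p_1}M^*$. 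I claim each such geodesic $\eta$ is the link at $p_1$ of a totally geodesic flat right triangle $T_\eta\subset M^*$ with legs $c_{12}$ and $c_{14}$. Granting this, the third side of $T_\eta$ is a minimal geodesic from $p_2$ to $p_4$ of length $\sqrt{|c_{12}|^2+|c_{14}|^2}=|c_{24}|$, and as $\eta$ varies over the circle its initial direction at $p_2$ varies over a circle in $\Sigma_{p_2}M^*$; in particular $c_{24}$ is not unique, and the initial directions of the minimal geodesics from $p_2$ to $p_4$ form a circle. To build $T_\eta$ one propagates the flat structure of $T=T_{\eta_0}$ along the two legs: at an interior point $q$ of $c_{12}$ the flat strip already present exhibits a closed geodesic of length $2\pi$ in $\Sigma_qM^*$, so $\Sigma_qM^*=\Sph^2(1)$ and $M^*$ is locally flat near $q$ (and likewise along $c_{14}$), after which the flat half-plane can be swung about the two legs through the angle prescribed by $\eta$.

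The step I expect to be the main obstacle is the last one in Part b): turning the single flat triangle $T$ into the full circle of triangles $T_\eta$, equivalently producing \emph{all} minimal geodesics from $p_2$ to $p_4$ rather than just a second one. This is where the lower curvature bound is genuinely needed --- to force the spaces of directions along $c_{12}$ and $c_{14}$, and the ambient metric near them, to be round and flat enough to allow the rotation. By contrast, the double count in the first paragraph and the comparison of flat triangles in Part a) are routine once the rigidity case of Toponogov's theorem is in hand.
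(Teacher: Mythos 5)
Your double count of the twelve angles and the resulting equalities is exactly the paper's argument for part a), and your non-uniqueness argument for the sides (forcing an angle $0$) is a minor reshuffling of the paper's "one similarly proves $\angle(v_3,v_4)=\pi/2$, impossible." Part a) is fine.

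In part b) you have correctly identified $u_{12},u_{14}$ as the two singular (pole) directions of $\Sigma_{p_1}M^*=\Sph_{k,\ell}$, and the conclusion you want is right, but the construction of the family $T_\eta$ by "swinging" the flat triangle $T$ about its legs has a genuine gap, as you yourself suspected. Two things in particular: the closed geodesic of length $2\pi$ in $\Sigma_qM^*$ that you want to exhibit at an interior point $q$ of $c_{12}$ is supposed to come from the two flat triangles $p_1p_2p_4$ and $p_1p_2p_3$ glued along $c_{12}$, but these may be on the same side of $c_{12}$, so you only get a broken loop, not a priori a closed geodesic; and even granting $\Sigma_qM^*=\Sph^2(1)$, that is a statement about the tangent cone at $q$ only and does \emph{not} imply that $M^*$ is flat in a neighborhood of $q$, so there is no ambient flatness in which to "swing." In a general nonnegatively curved $3$-dimensional Alexandrov space a flat $2$-disk through a point with smooth tangent cone need not have any neighbouring flat disks, and without additional input you cannot rotate $T$.

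The paper sidesteps this entirely by working in $M$ rather than $M^*$. Since $p_1$ is a fixed point, $T_{p_1}M\cong\C^2$ and $\Sigma_{p_1}M^*$ is the quotient of the unit $\Sph^3$ by the isotropy circle; the hypothesis $\angle(c_{12},c_{14})=\pi/2$ forces the preimage circles of $u_{12}$ and $u_{14}$ in $\Sph^3$ to be the two Hopf circles in orthogonal complex lines, so \emph{every} pair of horizontal lifts $c_{12}^h,c_{14}^h$ already has angle $\pi/2$ at $p_1$ in $M$. Because $d_M(p_2^h,p_4^h)\ge d_{M^*}(p_2,p_4)=\sqrt{|c_{12}|^2+|c_{14}|^2}$ (the projection is distance non-increasing), equality in the hinge version of Toponogov in the smooth manifold $M$ yields a flat totally geodesic triangle for each choice of lifts, a $\T^2$-family. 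Projecting these triangles to $M^*$ gives a circle's worth of distinct minimal geodesics from $p_2$ to $p_4$, and the constraint $\angle(c_{23},c_{24})$ independent of the choice falls out of the flatness of the comparison picture. So the step you correctly flagged as the obstacle is exactly where the paper invokes the manifold structure of $M$ (Toponogov rigidity there is immediate), rather than trying to produce rigidity in the singular space $M^*$. If you want to stay in $M^*$, you would need to establish flatness of a whole neighborhood of $c_{12}\cup c_{14}$, which is more than the tangent cone computation gives you.
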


It will be clear from the proof of b) that the
number $\angle (c_{23},c_{24})$ is independent of 
the choice of $c_{24}$.  The fact that the initial directions form a circle implies 
that this is indeed the only constraint for initial directions 
of minimal geodesics from $p_2$ to $p_3$.

\begin{proof}[Proof of Lemma~\ref{lem: euler 4}]
{\em a)} For any three points 
the sum of the angles is at least $\pi$. 
Thus the $12$ angles add up to at least $4\pi$. 
For each fixed point $p_i$ the three angles based at $p_i$ form 
a triangle in $\Sigma_{p_i}M^*\le \Sph^2(1/2)$. 
Thus the three angles can add to up to at most $\pi$. 
Clearly equality must hold everywhere.
Let $v_2$, $v_3$, $v_4$  be the initial vectors of $c_{12}$, $c_{13}$, $c_{14}$ 
and let $\tilde{v}_3$ denote the initial direction of a different minimal
from $p_1$ to $p_3$. 
The sum of the angles is in each case $\pi$. 

Recall that distance of three points  in $\Sph^2(1/2)$ only add up to $\pi$ 
if the three points are on a great circle but not on an open semicircle.
Moreover $d(v_2,v_3)=d(v_2,\tv_3)$ because both numbers are 
equal to the angle in the comparison triangle of $p_1,p_2,p_3$.
Using that $\Sigma_{p_i}M^*\le \Sph^2(1/2)$ 
this readily implies
$\angle(v_2,v_4)=\pi/2$.
If $v_2$ would not be unique one could prove similarly $\angle(v_3,v_4)=\pi/2$ 
which is clearly impossible.

{\em b)} Let $c_{12}^h$ and $c_{14}^h$ be arbitrary lifts of $c_{12}$ and $c_{14}$. 
Using $\angle (c_{12},c_{14})=\pi/2$ we deduce that 
the initial directions of $c_{12}^h$ and $c_{14}^h$ can be chosen arbitrarily in 
two  two-dimensional orthogonal subspaces. 
In particular for any choice $\angle (c_{12}^h,c_{14}^h)=\pi/2$. 
By the equality discussion in 
Toponogov theorem there is a flat triangle  in
$M$ which has $c_{12}^h$ and $c_{14}^h$ in its boundary. 
The remaining side is a minimal geodesic $c_{13}$ from $p_2$ to $p_4$ in $M$. 

Moreover the triangle must be horizontal. 
It is clear that this gives a two dimensional ($\T^2$ ) family  of minimal geodesics from 
$p_2$ to $p_4$ and thus $c_{24}$ is not unique in $M^*$.
\end{proof}

\begin{proof}[Proof of Theorem~\ref{thm: euler 4}]
The lemma implies in particular that if there is one right angle 
then there are exactly four and the
configurations forms a unique  rectangle and with many  choices for the diagonals. 
From the proof it is then easy to see that any geodesic leaving 
perpendicular to one side of the rectangle 
meets the opposite side of the rectangle after the exact same time. 
Therefore, 
if $E_1$ and $E_2$ are two  sides of the triangle meeting in 
$p_1$, then $M^*\rightarrow \R^2$, $p\mapsto (d(E_1,p),d(E_2,p))$ 
is a submetry onto its image.

This proves the theorem as long as one of the geodesics is not unique.
In the remaining case all geodesics are unique and there are no right 
angles. Then each of the four triangles can be filled uniquely with 
a flat convex set which forms a totally geodesic $\Sph^2$ in $M^4/\S^1$. 
Using the uniqueness of the minimal geodesics in $M^*$ between fixed points it is then easy to 
see that a horizontal lift of $\Sph^2$ 'closes up' to a compact torus. 

Finally if there is one fixed point $p$ for which the space of direction 
$\Sigma_p(M^*)$ is given by $\Sph_{k,\ell}$ with $k,\ell \ge 2$, then 
we consider the three initial directions $v_1$, $v_2$, $v_3$ of 
minimal geodesics to the other fixed points. 
Using that $\sum_{i<j}\angle(v_i,v_j)=\pi$ we then deduce that
the two orbifold singularities in $\Sigma_pX=\Sph_{k,\ell}$ must be contained in 
$\{v_1,v_2,v_3\}$ and thus a right angle occurs.
 \end{proof}

\begin{rem}  


(a) In the situation of Theorem~\ref{thm: euler 4} a) one can use the soul theorem to see (without using the Poincar\'e conjecture) 
that either side of the $\Sph^2$ is given by a $3$-disc, see also Remark~\ref{rem: final} b).

(b) There are nonnegatively curved metrics  (M\"uter metrics) on $\Sph^2\times \Sph^2$ with an $\S^1$-symmetry 
for which $ \Sph^2\times \Sph^2/\S^1$ has positive curvature outside a set of 
codimension 1. 

(c)  Theorem~\ref{thm: rigid} remains valid if $M$ is replaced by a nonnegatively curved $4$-dimensional orbifold 
with Euler characteristic four. Recall that by a result of Martin Kerin [Ke, Theorem 2.4]
there are a lot of $4$-dimensional orbifolds of Euler characteristic four which have positive sectional curvature 
on an open dense set. These orbifolds  have an $\S^1$ symmetry with four isolated fixed points.
\end{rem}

\section{Knots arising as extremal subsets in $3$-dimensional 
Alexandrov spaces.} 

Throughout this section we let $X$ denote the 3-sphere endowed with an arbitrary Alexandrov metric of nonnegative curvature, and $c \subset X$ an extremal subset homeomorphic to $\Sph^1$.  Recall that $c$ being extremal means that for any point $p\in X\setminus c$, any point $q\in c$ with $d(p,q)=d(p,c)$ is a critical point of the distance function $d(p,\cdot)$. 
 Such sets play an important role in Alexandrov geometry cf. \cite{Petrunin} for further information.

The question we answer in this section is: Which knots 
can arise in this way? We will first show that the question is equivalent 
to asking for which knots $c$ in $X$ is the two fold branched cover $X_2(c)$ a spherical space form. 

To see this we need generalizations of both Lemma \ref{flat} and Lemma \ref{alex branching}.

\begin{lem}
An extremal closed curve $c \subset X$ is a locally flat 1-dimensional submanifold.
\end{lem}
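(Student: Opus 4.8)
The plan is to show that an extremal closed curve $c$ in the nonnegatively curved Alexandrov 3-sphere $X$ looks locally like a pair of antipodal points sitting inside the space of directions, exactly as in the proof of Lemma~\ref{flat}. The key input is the structure of spaces of directions along an extremal set: if $p \in c$, then $\Sigma_p X$ is a 2-dimensional Alexandrov space with curvature $\ge 1$ (hence, by the classification of such spaces, either a spherical surface of revolution type object — topologically $\Sph^2$ — or a quotient thereof), and $c$ determines an extremal subset $\Sigma_p c \subset \Sigma_p X$ consisting of the directions tangent to $c$. Since $c$ is a curve through $p$, $\Sigma_p c$ consists of at most two points, and extremality forces these two points to be at distance $\pi$ in $\Sigma_p X$ (if only one direction appeared, $c$ would have a boundary point at $p$, contradicting that $c \cong \Sph^1$). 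So first I would establish: at every $p \in c$, the space of directions $\Sigma_p X$ contains exactly two $c$-directions, and they are antipodal (distance $\pi$).

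Next I would use the fact — which is the 3-dimensional analogue of the slice-theorem argument in Lemma~\ref{flat} — that in a 3-dimensional nonnegatively curved Alexandrov space, a small metric ball around $p$ is homeomorphic to the cone over $\Sigma_p X$, and this homeomorphism can be chosen to carry the intersection of $c$ with the ball to the cone over $\Sigma_p c$. Combined with the previous step, $c \cap B_\epsilon(p)$ is an arc through the cone point whose two ends go out along a pair of antipodal directions; topologically this is an unknotted arc through a ball, so $c$ is a 1-manifold near $p$. For local flatness I would invoke that $\Sigma_p X$, being a closed 2-dimensional Alexandrov space with curvature $\ge 1$, is homeomorphic to $\Sph^2$ (when $\Sigma_p X$ is not a sphere — e.g. a disc — this cannot happen at an interior point of $c$, since $c$ would then fail to be a manifold there, or one argues the boundary strata of $X$ are themselves handled separately; in the present setting $X = \Sph^3$ has no boundary so $\Sigma_p X \cong \Sph^2$ always), and that one may choose the homeomorphism $\Sigma_p X \to \Sph^2(1)$ taking the two antipodal $c$-directions to a genuine antipodal pair of $\Sph^2(1)$. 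Coning off gives a homeomorphism $(B_\epsilon(p), c \cap B_\epsilon(p)) \cong (\Disc^3, \text{diameter})$, which is precisely local flatness.

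The main obstacle I expect is the first step — showing $\Sigma_p c$ is exactly two antipodal points. One must rule out: (i) $\Sigma_p c$ empty or a single point, and (ii) $\Sigma_p c$ two points at distance $< \pi$. Case (i) contradicts $c$ being a manifold-point-free embedded circle: $c$ has no endpoints, so at least one tangent direction in each ``side'' survives, and the local cone structure promotes tangent directions to genuine arcs of $c$. Case (ii) is where extremality is essential: if the two $c$-directions $v_1, v_2 \in \Sigma_p X$ satisfied $d(v_1,v_2) < \pi$, one could find a point $q$ near $p$, off $c$, realizing $d(q,c) = d(q,p)$ but with the direction from $p$ to $q$ making an angle $< \pi/2$ with one of the $v_i$ — contradicting that $p$ is a critical point of $d(q,\cdot)$, which requires every direction at $p$ to be within $\pi/2$ of \emph{some} direction of $c$. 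Making this precise requires the first variation formula in Alexandrov spaces and a careful choice of $q$ inside $B_\epsilon(p)$; this is the technical heart of the argument, and I would model it on the standard fact that extremal subsets have ``antipodal'' spaces of directions (cf. Petrunin \cite{Petrunin}). The remaining steps are then essentially identical to the proof of Lemma~\ref{flat}, with the group-theoretic slice theorem replaced by the topological cone-structure of 3-dimensional Alexandrov spaces.
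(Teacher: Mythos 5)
Your overall reduction — pass to the tangent cone at a point $p\in c$, note that $T_pc$ is the cone over two points in $\Sigma_pX$, and conclude by a homeomorphism of $\Sigma_pX\cong\Sph^2$ sending those two points to an antipodal pair — is the same skeleton as the paper's proof. But there are two real problems with what you wrote.

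First, your central intermediate claim, that the two directions $v_1,v_2\in\Sigma_pc$ ``must be at distance $\pi$,'' is false, and the ``case (ii)'' sketch you give to prove it does not work. The curve $c$ can have corners. In the very application the paper has in mind, $c$ is (an extension of) the closure of the singular strata in an orbit space $M^*$, and at an isolated fixed point $p$ the space of directions is $\Sph_{k,\ell}$ with the two $c$-directions being the two orbifold singular points, which sit at distance $\pi/2$, not $\pi$. This is perfectly consistent with extremality because $\Sph_{k,\ell}$ for $k,\ell\ge 2$ has diameter $\le\pi/2$, so every point is critical for every distance function; your informal first-variation argument implicitly assumes you can find a nearby $q$ with a unique geodesic direction landing far from both $v_i$, which you cannot in such a space of small diameter. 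You also identified this (false) step as ``the technical heart'' of the proof, which misplaces the difficulty: for local flatness you only need $\Sigma_pc$ to consist of two \emph{distinct} points, since any two distinct points of $\Sph^2$ can be taken to an antipodal pair by a homeomorphism; their metric distance in $\Sigma_pX$ is irrelevant.

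Second, the step that actually carries the entire proof — that a small metric ball around $p$ is homeomorphic to the ball in the tangent cone \emph{by a homeomorphism taking $c$ to the tangent cone of $c$} — is not ``the $3$-dimensional analogue of the slice-theorem argument.'' It is the Relative Stability Theorem for extremal subsets (Kapovitch, \cite{Ka}, Theorem 9.2), applied via rescaling, which is exactly what the paper cites and is the only nontrivial ingredient here. The plain Perelman Stability Theorem gives you the conical structure of the ball; getting the homeomorphism to respect the extremal set $c$ is precisely what the relative version provides, and you should not present it as folklore. If you fix those two points — drop the antipodality claim entirely, and invoke the Relative Stability Theorem explicitly — your write-up becomes essentially the paper's argument.
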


\begin{proof}
As in the proof of \ref{flat} it suffices to see that for each point of $c$, a sufficiently small ball is homeomorphic to the corresponding ball in its tangent cone by a homeomorphism taking the intersection of $c$ with the ball to the corresponding extremal curve in the tangent cone. The latter is of course the intersection of the ball with the cone of the space of directions of $c$, which is a pair of extremal points in the space of directions of $X$ at the point. The claim is a direct consequence of the Relative Stability Theorem due to Kapovitch \cite{Ka} (Theorem 9.2) via scaling. 
 \end{proof}

\begin{lem} $X_2(c)$ is an Alexandrov space with nonnegative curvature.
\end{lem}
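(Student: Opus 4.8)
The plan is to mimic the proof of Lemma~\ref{alex branching} (the Branched Cover Lemma), since the mechanism is the same: one wants to realize $X_2(c)$ as a limit of subsets of $X$ that carry the lower curvature bound, and then invoke that Alexandrov spaces are closed under Gromov--Hausdorff limits together with a local gluing argument. First I would stratify $c$ by isotropy-type analogue: here there is no group action, but $c$ being an extremal curve means that at each of its points the space of directions $\Sigma_p X$ contains a pair of extremal points, and by the previous lemma $c$ is locally flat, so a neighborhood of $c$ in $X$ is homeomorphic to $\Sph^1\times\Disc^2$ with $c$ the core. Away from $c$ the branched cover is just an honest double cover, and near $c$ it is built by the normal-circle doubling construction exactly as in Section~1; so $X_2(c)$ is a well-defined topological space, smooth wherever $X$ is smooth near $c$, and comes with an isometric involution $\iota$ fixing (the preimage of) $c$.

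Next I would establish the curvature bound. The key point, as in Lemma~\ref{alex branching}, is a \emph{convexity} statement: $X\setminus c$ is convex in $X$ because a minimal geodesic cannot touch an extremal set except possibly at its endpoints --- this is precisely the defining property of extremality (a minimizer from an exterior point to $c$ meets $c$ critically, and more strongly, an extremal set is ``geodesically repulsive'' so no interior point of a minimal geodesic between two points of $X\setminus c$ lies on $c$; this is standard, cf.\ \cite{Petrunin}). Granting this, every geodesic triangle in $X_2(c)$ with vertices off $c$ lifts from a geodesic triangle in $X\setminus c$, hence satisfies the Toponogov comparison with the same lower bound; and since $X_2(c)\setminus c$ is dense, a limiting argument (geodesic triangles in $X_2(c)$ are limits of ones avoiding $c$, using that $\iota$ is an isometry and the branched-cover metric is complete and locally compact) gives the comparison inequality on all of $X_2(c)$. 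One still needs $X_2(c)$ to be a genuine (finite Hausdorff dimension, locally compact, complete, geodesic) length space with curvature bounded below; that follows from the explicit local model near $c$ and properness away from it. The description of $\Sigma_p X_2(c)$ as the canonical two-fold branched cover of $\Sigma_p X$ along its pair of extremal points is then read off exactly as in the proof of Lemma~\ref{alex branching}, by counting geodesics emanating from $p$.

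The main obstacle I anticipate is the convexity claim $X\setminus c$ convex, i.e., that no minimal geodesic between two non-extremal points passes through the extremal curve $c$ at an interior point. In the group-action setting of Lemma~\ref{alex branching} this was the easy structural fact about isotropy along geodesics; here it must be extracted purely from the Alexandrov-geometric definition of an extremal set. I would argue by contradiction: if a minimal geodesic $\gamma$ from $p$ to $q$ (both in $X\setminus c$) meets $c$ at an interior point $r$, then one of the two subsegments, say $\gamma|_{[p,r]}$, realizes $d(p,c)$ (or can be shortened if not), so $r$ is a point of $c$ nearest to $p$, and extremality forces $\gamma|_{[p,r]}$ to arrive at $r$ making an angle $\le\pi/2$ with \emph{every} direction at $r$ --- in particular with the continuation $\gamma|_{[r,q]}$; but $\gamma$ being minimal forces that angle to be exactly $\pi$, a contradiction. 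This is the step to write with care (one must handle the case where $r$ is not the closest point of $c$ to $p$ by choosing the subsegment that does realize the distance, and invoke the first-variation/critical-point characterization correctly), but it is the standard reason extremal sets behave like totally geodesic obstacles, so I expect it to go through cleanly. Everything else is a routine transcription of the Branched Cover Lemma's proof.
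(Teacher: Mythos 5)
Your plan founders on the convexity claim, and that claim is actually false in the generality needed. It is \emph{not} true that minimal geodesics of $X_2(c)$ (or even of $X$) avoid $c$. The cleanest counterexample is $X=\Sph^3/\Z_2$ with the round metric, where $\Z_2$ acts by $(z_1,z_2)\mapsto(z_1,-z_2)$ and $c$ is the singular great circle $\{z_2=0\}$ with normal cone angle $\pi$; then $X_2(c)$ is the round $\Sph^3$, and of course plenty of minimal geodesics (great circle arcs) pass transversally through $c$. More generally, whenever the normal cone angle of $c$ in $X$ equals $\pi$ at some point (which extremality permits), the doubled cone angle in $X_2(c)$ is $2\pi$ there and geodesics of $X_2(c)$ can cross $c$. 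In Lemma~\ref{alex branching} the authors did assert convexity --- but only of $M^*_2(c)\setminus c_2$, i.e.\ they explicitly allowed crossings at $\Z_2$-orbits, and there the decisive additional input was that $M^*_2(c)$ is a \emph{smooth Riemannian manifold} near $c-c_2$ (a fact provided by the group action). In the purely Alexandrov setting of Section~5 that smoothness is simply not available: points of $c$ with cone angle $2\pi$ in $X_2(c)$ need not be Riemannian, so even after excising the ``bad'' subset of $c$ you cannot conclude the local curvature bound near the remaining crossings by inspection.

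A secondary but real problem is your proof sketch of the convexity. You write that extremality forces the geodesic to arrive at $r$ making angle $\le\pi/2$ with \emph{every} direction at $r$; that inverts the quantifiers in the definition of a critical point, which says that for every direction $v\in\Sigma_rX$ there exists \emph{some} minimizer from $r$ to $p$ at angle $\le\pi/2$ from $v$. (The true consequence, via Toponogov, is the weaker and purely local fact that a minimal geodesic from $p$ to a \emph{foot point} $r$ of $c$ cannot be extended minimally past $r$; this does not rule out minimal geodesics crossing $c$ at non-foot points, and in any case concerns $X$, not $X_2(c)$.) You also need to be careful with the claim that ``triangles in $X_2(c)$ with vertices off $c$ lift from $X$'': the projection of a minimal geodesic of $X_2(c)\setminus c$ is only a \emph{local} geodesic of $X$, typically non-minimal, so Toponogov in $X$ cannot be invoked hinge-by-hinge; one must instead use the local curvature bound on $X_2(c)\setminus c$ together with a globalization argument, which is exactly where the incompleteness of $X_2(c)\setminus c$ and the behavior at $c$ must be controlled.

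The paper's actual proof meets these difficulties head on and is genuinely different from Lemma~\ref{alex branching}. It has three steps: (1) establish the hinge version of Toponogov based at points of $c$ by lifting Petrunin's gradient exponential map to a $1$-Lipschitz map $\widehat{\mathrm{gexp}}_a\colon T_aX_2(c)\to X_2(c)$ --- the crucial observation being that gradient curves of distance functions are tangent to the extremal set $c$, so lifts exist and are unique; (2) show that for fixed $q\notin c$, the set of points $s$ joined to $q$ by a minimal geodesic through $c$ has measure zero, via an explicit Lipschitz estimate on the ``shooting map'' $f_h$ past $c$; (3) combine these with the characterization of lower curvature bounds through semiconcavity of $d(\cdot,q)^2$, using (2) to reduce to generic configurations. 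If you want to repair your proposal, you would essentially have to reproduce those three steps; the convexity shortcut is not available here.
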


\begin{proof} 
 For $a\in c$, the tangent cone $T_a X_2(c)$ 
 exists and is a two fold branched cover of $T_a X$.
 It is easy to see that the space of directions $\Sigma_aX_2(c)$ of $X_2(c)$ at $a$ 
 is an Alexandrov space of curvature $\ge 1$ whose projection to
 the space of directions of $X$ at $a$ is a  two fold branched cover. 
The branching locus of the cover in $\Sigma_aX_2(c)$ is given by two points corresponding to 
 the two directions in $T_ac$.
 
 Clearly geodesics in the two fold branched cover $\sigma\colon X_2(c)\rightarrow X$ are non-branching 
and even more two minimal  geodesics $c_1,c_2\colon [0,1]\rightarrow X_2(c)$
 with the same initial direction coincide.\\[1ex] 
{\bf Step 1.} For any hinge in $X_2(c)$ based at a point $a\in c$ Toponogov's theorem holds.\\[1ex]
 We consider the gradient exponential map gexp$_a\colon T_aX\rightarrow X$ 
 due to Petrunin \cite{Petrunin}. It is $1$-Lipschitz and equal to the usual 
 exponential map at all points where the latter is defined. Since the gradient of the distance 
 function $d(q,\cdot)$ is tangential to $c$ along $c\setminus q$, it follows that
 if $v\in \Sigma_a X$ satisfies  gexp$(t_0v) \in c$ for some 
 $t_0>0$ then gexp$(tv)\in c $ for all $t>t_0$. 
  This in turn shows 
 that for any  lift $\hat v\in \Sigma_aX_2(c)$ of $v$ we can find a unique continuous lift 
 of gexp$(tv)$ ($t\ge 0$) to a continuous curve $\widehat{\mbox{gexp}}(tv)$. 
 It is then easy to see that $\widehat{\mbox{gexp}}_a\colon T_aX_2(c)\rightarrow X_2(c)$ is 1-Lipschitz. In fact one can distinguish between two cases. If $w\in T_aX_2(c)$ with $\widehat{\mbox{gexp}}(w)\notin c$ then $\widehat{\mbox{gexp}}$ is 1-Lipschitz in a neighbourhood of $w$ 
 since it is a lift of gexp.
 If $w\in T_aX_2(c)$ with $\widehat{\mbox{gexp}}(w)\in c$ then 

 \[
 d(\widehat{\mbox{gexp}}(w),\widehat{\mbox{gexp}}(v))
 =d(\mbox{gexp}(\sigma_*w),\mbox{gexp}(\sigma_*v)) 
 \le d(\sigma_*v,\sigma_*w)\le d(v,w).
 \] 
Clearly $\widehat{\mbox{gexp}}_a$ coincides with the usual exponential map for all points where the latter is defined. 
If $v,w\in T_aX_2(c)$ are the initial vectors of a hinge in $X_2(c)$, then the Euclidean comparison 
hinge is isometric to the hinge spanned by $v$ and $w$ in $T_aX_2(c)$ (based at the cone point). 
Since $\widehat{\mbox{gexp}}_a$ is 1-Lipschitz it maps the opposite side of the hinge to a shorter curve and the comparison follows.\\[1ex]
{\bf Step 2.} Let $q\in X_2(c)\setminus c$ and $S$ the set of all 
points $s\in X_2(c)$  for which a minimal geodesic from $q$ to $s$ 
passes through $c$. Then $S$ has measure zero.\\[1ex] 
 We let $c^h$ denote the subset of all points $s\in c$ 
 such that the minimal geodesic $\gamma_{qs}$
 from $q$ to $s$ continues to be minimal if we extend it by $h>0$.
 
 There is a map $f_h\colon c^h\rightarrow S$ which maps 
 $s=\gamma_{qs}(d(q,s))$ to the point $\gamma_{qs}(d(q,s)+h)$. 
 
 We finish the proof by showing that this map is locally Lipschitz. 
 Put $\eps_0=\tfrac{1}{3}\min \{1,h,d(q,c)\}$.
 More precisely we plan to show that for two points $s,s'\in c^h$ with $d(s,s')\le \eps_0$ 
 we have $d(f_h(s),f_h(s'))\le  \tfrac{4h}{\eps_0}d(s,s')$.

 Let $v\in T_sX_2(c)$ be the initial vector of the minimal geodesic to $f_h(s)$, 
 $u\in T_sX_2(c)$ the initial vector of the minimal geodesic to $q$ and $x\in T_sX_2(c)$ the initial 
 vector of a minimal geodesic  from $s$ to $s'$.  
 Of course the minimal geodesic in $T_sX_2(c)$ passes through the cone point and therefore the 
 triangle spanned by $u,v$ and $x$ is Euclidean. 
 By assumption we know that $|x|=d(s,s') \le \eps_0\le \tfrac{1}{3}\min\{|u|,|v|\}$.
 This in turn implies that we can estimate the defect in the triangle in equality 
 $d(u,x)+d(v,x)-(|u|+|v|)\le \tfrac{|x|^2}{\eps_0}$. Since the $g$ exponential map $\widehat{\mbox{gexp}}_s$
 is $1$-Lipschitz,
 we obtain $d(s',f_h(s))+d(s',q)-d(q,f_h(s))\le \tfrac{d(s,s')^2}{\eps_0}$.
 By Step 1, we can apply the hinge version of Toponogov's theorem to  the triangle $(f_s(h),q,s')$ to see that 
 that the angle between the minimal geodesic from $s'$ to $f_s(h)$ and the minimal 
 geodesic from $s'$ to $q$ is given by $\pi-\varphi$ with 
 \[
 \varphi \le \tfrac{2d(s,s')}{\eps_0}. 
 \]
Next notice that $\varphi$ is the angle between the minimal geodesic from $s'$ to $f_h(s)$ 
and the minimal geodesic from $s'$ to $f_h(s')$. 
Using Step 1 once more and $d(s',f_h(s))\le h+d(s,s')$ 
 we see that $d(f_h(s),f_h(s'))\le  \tfrac{2d(s,s')h}{\eps_0}+d(s,s')\le 4\tfrac{d(s,s')h}{\eps_0}$.\\[1ex]
{\bf Step 3.} Toponogov's theorem holds in $X_2(c)$.\\[1ex]
Notice that 
any geodesic in $X_2(c)$ is the limit of geodesics
which do not meet $c$.
It is well known that Toponogov's theorem is equivalent to saying 
that for all $q$ in $M$ the function 
$f(p)=d(p,q)^2$ satisfies $\mathrm{Hess}(f)\le 2$ in the sense of support functions, i.e., 
$f\circ \gamma(t)-t^2$ is concave for any unit speed geodesic $t\in [a,b]$. 

Clearly we may assume that $\gamma$ and $q$ are generic. 
Thus we may assume that $\gamma(t)$ does not meet the branching locus 
and (by Step 2) that the set of parameters $t$ for which a minimal geodesic from 
$q$ to $\gamma(t)$ intersects $c$ 
forms a set of measure zero. 

Since $X_2(c)$ is in a neighbourhood of the image of $c$ an Alexandrov space it follows that 
$\mathrm{Hess}(f)(\gamma(t))\le C$ for all $t\in [a,b]$ for some large $C$. 
Thus it suffices to prove that $\mathrm{Hess}(f)(\gamma(t))\le 2$ for all $t$ for which 
a minimal geodesic $\alpha$ from $q$ to $\gamma(t)$ does not meet the branching locus $c\subset X_2(c)$.
This in turn one can establish analogous to the 
usual  globalization theorem of 
Toponogov's comparison statement. 
\end{proof}

\medskip

We are now ready to analyze our question. As mentioned above our point of departure is the following

\begin{lem}  The two fold branched cover $X_2(c)$ of $X$ has finite fundamental group. 
And $H_1(X_2(c),\Z_2)=0$. 
\end{lem}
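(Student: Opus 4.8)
The plan is to argue that $X_2(c)$ is, topologically, an Alexandrov space of nonnegative curvature which is a closed $3$-manifold, so the Cheeger–Gromoll splitting/soul theory for Alexandrov spaces applies, and to extract the fundamental group statement from the fact that a nonnegatively curved closed $3$-dimensional Alexandrov space has almost nonnegative first Betti number. First I would record that $X_2(c)$ is a closed topological $3$-manifold: away from $c$ it is a genuine Riemannian (or Alexandrov) manifold, and along $c$ the local structure is the cone on $\Sigma_a X_2(c)$, which by the previous lemma is a two-fold branched cover of $\Sigma_a X \simeq \Sph^2$ branched over two points, hence again homeomorphic to $\Sph^2$; thus $X_2(c)$ is everywhere locally Euclidean. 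Combined with the previous lemma (Toponogov holds, so $X_2(c)$ is a nonnegatively curved Alexandrov space), $X_2(c)$ is a closed nonnegatively curved Alexandrov $3$-manifold.

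Next I would invoke the structure theory of nonnegatively curved Alexandrov spaces (the splitting theorem, or equivalently the fact that the universal cover splits off a Euclidean factor isometrically with a compact complement): the universal cover $\widetilde{X_2(c)}$ is isometric to $\R^k \times Y$ with $Y$ compact and simply connected. Since $\dim = 3$, either $k = 0$ (so $\widetilde{X_2(c)}$ is compact and $\pi_1(X_2(c))$ is finite, being a group acting freely cocompactly on a compact space — here I would use that the deck group is finite because a compact Alexandrov space has finite-dimensional, hence discrete-cocompact-implies-finite, isometry considerations, or more simply cite the known classification saying a closed nonnegatively curved Alexandrov $3$-manifold has finite $\pi_1$ unless it is covered by $\R \times$ (surface) or $\R^3$), or $k \ge 1$, in which case $X_2(c)$ has a finite cover that is a Euclidean vector bundle over a compact nonnegatively curved Alexandrov space of dimension $3-k$ — but such covers would have a $\Z$ summand in $H_1$, and I must rule this out. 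The clean way to rule it out is a direct homological computation: the covering involution $\iota$ on $X_2(c) \setminus c$ and the transfer/Euler-characteristic argument, or most directly, the Mayer–Vietoris / branched-cover formula. In fact $X$ is $\Sph^3$, $c$ is an unknot-or-knot in $\Sph^3$, and the two-fold branched cover of $\Sph^3$ over *any* locally flat knot is a closed $\Z$-homology sphere whenever the knot is a knot (this is the classical fact that $H_1$ of the two-fold branched cover of a knot is finite of odd order, determined by $\Delta_c(-1)$); combined with the locally flat smoothing Lemma~\ref{smoothing} this gives $b_1(X_2(c)) = 0$ and $H_1(X_2(c);\Z)$ finite of *odd* order outright, which immediately yields both assertions: finiteness of $\pi_1$ follows since a nonnegatively curved Alexandrov $3$-manifold with $b_1 = 0$ cannot have an infinite fundamental group (an infinite $\pi_1$ would force, by the splitting theorem, a $\Z$ or a crystallographic quotient, contradicting $b_1 = 0$ and finiteness of $H_1$ with odd order), and $H_1(X_2(c);\Z_2) = H_1(X_2(c);\Z) \otimes \Z_2 = 0$ because the order is odd.

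I would organize the write-up as: (1) $X_2(c)$ is a closed Alexandrov $3$-manifold of nonnegative curvature (immediate from the two preceding lemmas plus the local cone description); (2) the classical knot-theory computation, via the Smoothing Lemma, that $H_1(X_2(c);\Z)$ is finite of odd order — in particular $H_1(X_2(c);\Z_2) = 0$; (3) finiteness of $\pi_1$: were it infinite, the Alexandrov splitting theorem would give a finite-index subgroup that is infinite abelian (virtually $\Z$ or virtually $\Z^2$ or virtually $\Z^3$), forcing a nontrivial free part in $H_1$ of a finite cover, and then a transfer argument pushes a torsion-free class down, contradicting (2). The main obstacle is Step (3): making precise that "infinite $\pi_1$" is incompatible with "$H_1$ finite of odd order" for these spaces — the cleanest route is to appeal to the homeomorphism classification of closed nonnegatively curved Alexandrov $3$-manifolds (which are spherical, Euclidean, or $\Sph^2 \times \R$ space forms up to finite cover), note that all infinite-$\pi_1$ cases have even-order or infinite $H_1$, and conclude; alternatively one cites directly that a closed nonnegatively curved Alexandrov $3$-manifold that is a rational homology sphere has finite $\pi_1$.
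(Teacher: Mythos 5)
Your proposal is essentially correct, but it reaches the key conclusion $H_1(X_2(c),\Z_2)=0$ by a genuinely different route than the paper. You invoke the classical knot-theoretic fact that the double branched cover of a (locally flat, hence smoothable via Lemma~\ref{smoothing}) knot $c\subset\Sph^3$ has $H_1$ finite of odd order, equal to $|\Delta_c(-1)|$ (the knot determinant), and that this is odd since $\Delta_c(-1)\equiv\Delta_c(1)=\pm 1\pmod 2$. That is correct as far as it goes --- although the phrase ``closed $\Z$-homology sphere'' is a slip, since $H_1$ is generally nontrivial torsion (e.g.\ $\Z_3$ for the trefoil); what you actually use, odd order, is the right statement. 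The paper instead gives a self-contained group-theoretic argument that never mentions the Alexander polynomial: it lifts the covering involution $\iota$ to the universal cover of $X_2(c)$, passes to an index-two invariant subgroup $\Pi'$ of $\Pi=\pi_1(X_2(c))$ (which would exist if $H_1(\cdot;\Z_2)\neq 0$), and observes that because the fixed set of $\iota$ is the connected circle $c$, every nontrivial element of the lifted group $\Gamma$ with fixed points lies in the conjugacy class of $\tilde\iota$; killing that conjugacy class then produces a nontrivial free quotient contradicting $\pi_1(X)=\pi_1(\Sph^3)=1$. The paper's argument buys conceptual clarity and portability (it depends only on simple connectivity of $X$ and connectedness of the branch locus, so it generalizes more readily), while your route is shorter for the specific case at hand, at the cost of importing the Seifert/Fox determinant computation.

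For the finiteness of $\pi_1$ both arguments coincide: in dimension three, a closed nonnegatively curved Alexandrov manifold with infinite $\pi_1$ is flat or finitely covered by $\Sph^2\times\Sph^1$, and in either case $H_1(\cdot;\Z_2)\neq 0$ (the paper cites Wolf, Theorem 3.5.5). One caveat on your write-up: the intermediate ``transfer argument pushes a torsion-free class down'' is not viable --- a transfer for a finite cover $p\colon\tilde X\to X$ shows $p^*$ is injective on $H^*(\cdot;\Q)$, i.e.\ $b_1(X)\le b_1(\tilde X)$, which is the wrong direction (the Hantzsche--Wendt manifold has $b_1=0$ but is covered by $T^3$ with $b_1=3$). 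Stick with the classification route you also sketch, which matches the paper.
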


\begin{proof}

We view $H_1(X_2(c),\Z_2)$ as vectorspace over $\Z_2$.
Suppose for the moment it is not $0$-dimensional. 
Let $\iota$ denote the involutive isometry of $X_2(c)$
 given by the nontrivial deck transformation of the branched cover.
The automorphism $H_1(\iota)$ of $H_1(X_2(c),\Z_2)$ induced by $\iota$ 
then leaves some subspace $V\subset H_1(X_2(c),\Z_2)$ of codimension $1 $ 
invariant. Since $H_1(X_2(c),\Z_2)\cong \Pi/\ml \{g^2 \mid g\in \Pi \}\mr$ 
is a quotient of $\Pi$, 
this in turn shows that $\Pi$ contains a subgroup $\Pi'$ of index $2$ 
which is invariant under the natural action of $\pi_1(\iota)$.

Consider next the action of $\Pi$ on the universal cover 
$\widetilde{X_2(c)}$. 
We can also lift $\iota$ to an isometry $\tilde{\iota}$ of the universal cover.

Let $\Gamma$ be the group generated by $\tilde{\iota}$ and $\Pi$ 
and 
let $\hGamma$ be the normal subgroup generated by the conjugacy class 
of $\tilde{\iota}$ in $\Gamma$. 
Finally let $\Gamma'$ be the group generated by $\tilde{\iota}$ 
and the subgroup $\Pi'$ which is normalized by $\tilde{\iota}$. 
Since $\Gamma'$ has index two in $\Gamma$ it is normal and thus 
$\hGamma\subset \Gamma'\neq \Gamma$.

Using that the fixed point set of $\iota$ in $X_2(c)$ is connected we deduce 
that any element in the group $\Gamma\setminus \{e\}$ with a nontrivial fixed point 
set is in the conjugacy class of $\tilde{\iota}$. 
Thus
the group $\Gamma/\hGamma$ acts freely on $\widetilde{X_2(c)}/\hGamma$ 
and $X=\widetilde{X_2(c)}/\Gamma$ is not simply connected -- a contradiction.

Hence $H_1(X_2(c),\Z_2)=0$.  Using that $X_2(c)$ is a topological manifold with 
a nonnegatively curved Alexandrov metric we see that either 
$\pi_1(X_2(c))$ is finite, $X_2(c)$ is two fold covered by 
$\Sph^1\times \Sph^2$ or $X_2(c)$ is a flat Riemannian manifold.
Since  $H_1(X_2(c),\Z_2)$ is non-trivial for the latter two cases (see \cite{Wo}, Theorem 3.5.5)
we conclude that  $\pi_1(X_2(c))$ is finite.

\end{proof}

Using the equivariant elliptization conjecture we know that $X_2(c)$ is a spherical 
space form endowed with a linear involution whose fixed point set is a circle. 
In addition $X_2(c)$ is a $\Z_2$-homology $3$-sphere.

\begin{lem}\label{lem: space forms} \label{space forms}Let $\Sph^3/\Pi$ be a spherical space form which is also a $\Z_2$ homology sphere. 
Then there is a linear involution $\iota$ (unique up to conjugation) whose fixed point set is 
a circle. Moreover the underlying topological space of the quotient $\Sph^3/\ml \Pi,\iota\mr$ is 
  the $3$-sphere.
\end{lem}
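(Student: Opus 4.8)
The plan is to reduce everything to the classification of finite subgroups of $\SO(4)$ acting freely on $\Sph^3$, together with an analysis of which such quotients are $\Z_2$-homology spheres, and then to exhibit the required involution explicitly in each case. First I would recall that a spherical space form $\Sph^3/\Pi$ with $\Pi\subset\SO(4)$ acting freely is a $\Z_2$-homology sphere precisely when $H_1(\Sph^3/\Pi,\Z_2)=\Pi/[\Pi,\Pi]\otimes\Z_2=0$, i.e.\ when $\Pi^{\mathrm{ab}}$ has odd order, equivalently $|\Pi|$ is odd or $\Pi$ is a generalized quaternion / binary polyhedral group (the binary dihedral groups of order $4m$ with $m$ even are excluded because their abelianization is $\Z_2\oplus\Z_2$; the ones with $m$ odd have abelianization $\Z_4$, which is also nontrivial mod $2$, so actually only $|\Pi|$ odd, the quaternion groups $Q_{2^k}$, the binary tetrahedral, binary octahedral and binary icosahedral groups, and products of these with odd cyclic groups survive). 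In every surviving case $\Pi$ contains a unique central element of order $2$ (when $|\Pi|$ is even) or none (when $|\Pi|$ is odd), and this is the structural fact I would lean on.

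Next I would produce the involution. Since $\Pi$ acts freely and orthogonally, view $\Sph^3\subset\C^2$ and $\Pi\subset\U(2)\subset\SO(4)$ after conjugation. The involution $\iota$ I want is complex conjugation on $\C^2$, i.e.\ $\iota(z_1,z_2)=(\bar z_1,\bar z_2)$, whose fixed point set on $\Sph^3$ is the unknotted circle $\Sph^3\cap\R^2$; it normalizes $\U(2)$ but I must arrange that it normalizes $\Pi$ itself. Here I would use that the $\Z_2$-homology-sphere condition forces $\Pi$ to be conjugate into a specific normal form: a cyclic group $\Z_n$ ($n$ odd) acting diagonally as $(z_1,z_2)\mapsto(\zeta z_1,\zeta^a z_2)$, or a binary polyhedral group, or an extension of one by an odd cyclic group. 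In each normal form one checks directly that conjugation by $\iota$ sends $\Pi$ to $\Pi$ — for the diagonal cyclic case because $\iota$ inverts each diagonal generator, and for the binary polyhedral groups because the standard generators (built from $\pm i,\pm j,\pm k$ and from the exceptional rotations) can be chosen with real or purely imaginary quaternion coefficients, which conjugation by $\iota$ permutes. Uniqueness of $\iota$ up to conjugation in $\Pi\backslash\SO(4)$ then follows from the fact that any two orientation-preserving involutions of $\Sph^3/\Pi$ with circle fixed set lift to conjugate involutions of $\Sph^3$, by the same equivariant elliptization / Smith-theory rigidity already invoked above, since both lifts are linear involutions of $\Sph^3$ with a circle of fixed points and all such are conjugate in $\O(4)$.

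Finally I would identify the quotient $\Sph^3/\langle\Pi,\iota\rangle$ as $\Sph^3$. The group $G=\langle\Pi,\iota\rangle$ has $\Pi$ as an index-$2$ normal subgroup and $\iota$ acts on $\Sph^3$ with nonempty fixed set, so $G$ does not act freely; thus $\Sph^3/G$ is an orbifold and the underlying space is $(\Sph^3/\Pi)/\langle\bar\iota\rangle$, the quotient of the space form by an involution with $1$-dimensional fixed set. One way to finish: pass to $\Sph^3/G$ directly. Since $\iota\in G\setminus\Pi$ has a circle of fixed points and every element of $\Pi$ acts freely, the singular locus of $\Sph^3/G$ is exactly the image of $\Fix(\iota)$ and its $\Pi$-translates, a locally flat circle (or disjoint union of circles, one per coset-representative involution — but all such are conjugate, so one circle), with $\Z_2$ cone angle; the underlying space is a simply connected closed topological $3$-manifold because $\pi_1^{\mathrm{orb}}=G$ is generated by $\Pi$ (killed by the branching, via the relation that a small loop around the branch circle is $\iota$-conjugate trivial) — more carefully, $\pi_1$ of the underlying space is the quotient of $G$ by the normal subgroup generated by $\iota$, which is all of $G$ since $\iota$ together with its $\Pi$-conjugates generates $G$ (using that $\Pi$ is generated by commutators-with-$\iota$ in the relevant normal forms, or simply that $G/\langle\!\langle\iota\rangle\!\rangle$ is a quotient of the finite group $\Pi^{\mathrm{ab}}$ of odd order that must also be a quotient of $\Z_2$, hence trivial). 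By the Poincaré conjecture the underlying space is $\Sph^3$.

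\smallskip

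\emph{Expected main obstacle.} The delicate point is the normalization step: showing that in \emph{every} $\Z_2$-homology-sphere case one can simultaneously conjugate $\Pi$ into a form in which complex conjugation $\iota$ is a genuine normalizing involution with \emph{unknotted} circle fixed set, and in particular handling the binary octahedral and icosahedral groups and the mixed (odd cyclic $\times$ binary polyhedral) cases uniformly. A cleaner route around this — which I would actually prefer to write — is to avoid explicit normal forms and instead argue abstractly: the equivariant elliptization conjecture (already invoked just before the lemma) gives a linear involution $\iota$ on $\Sph^3/\Pi$ with \emph{some} fixed set; Smith theory plus $H_1(\Sph^3/\Pi,\Z_2)=0$ forces $\Fix(\iota)$ to be a $\Z_2$-homology circle, hence an honest circle by the $3$-manifold structure; its preimage in $\Sph^3$ is the fixed set of a lift $\tilde\iota$, a linear involution of $\Sph^3$, whose fixed set is either $\emptyset$, two points, a circle, or $\Sph^2$ — and the covering-degree and Euler-characteristic bookkeeping ($\chi(\Fix\tilde\iota)$ maps to $2\chi(\Fix\iota)=0$) rules out all but the circle, so $\tilde\iota$ is conjugate in $\O(4)$ to standard complex conjugation and the circle is unknotted; uniqueness follows from conjugacy of all such linear involutions. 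This reduces the lemma to the (now routine) identification of $\Sph^3/\langle\Pi,\iota\rangle$ with $\Sph^3$ via the $\pi_1$-computation and the Poincaré conjecture as above, and the only real content left is the Smith-theory Euler-characteristic argument.
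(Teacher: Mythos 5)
Your overall strategy --- classify the groups $\Pi$ that can occur, then exhibit an explicit normalizing involution --- matches the paper's, but you have made a concrete classification error. You list the generalized quaternion groups $Q_{2^k}$ and the binary octahedral group among those surviving the $\Z_2$-homology-sphere condition, but both must be excluded: $Q_{2^k}^{\mathrm{ab}}\cong\Z_2\oplus\Z_2$ and the binary octahedral group has abelianization $\Z_2$, so in each case $H_1(\Sph^3/\Pi,\Z_2)\neq 0$. (Indeed you yourself exclude all binary dihedral groups two lines earlier, and $Q_{2^k}$ \emph{is} one.) The correct list, as in the paper, consists only of the odd-order cyclic groups and the tetrahedral- and icosahedral-type groups in $\U(2)$ (together with odd cyclic central factors). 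This matters because the uniform claim that ``standard generators can be chosen with real or purely imaginary quaternion coefficients'' does not carry over to the octahedral case, and if it had to, the lemma would be false as stated.

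The ``cleaner route'' you say you would prefer to write has a more fundamental gap: equivariant elliptization (Dinkelbach--Leeb) takes a \emph{given} finite group action on an elliptic $3$-manifold and linearizes it; it does not \emph{produce} an involution out of nothing. Lemma \ref{space forms} is precisely an existence statement, so invoking elliptization at that point is circular --- there is no involution yet whose action could be smoothed. In the paper, elliptization is used just \emph{before} the lemma, where $X_2(c)$ already carries the deck involution of the branched cover; the lemma itself then has to construct $\iota$ by hand, separately in each of the three cases, and verify normalization explicitly. Two smaller issues: your Euler-characteristic bookkeeping $\chi(\Fix\tilde\iota)=2\chi(\Fix\iota)$ ignores that the preimage of $\Fix(\iota)$ is the union of fixed sets of all the $|\Pi|$ lifts of $\iota$, not of a single lift; the paper instead bounds $\Fix(\iota)$ via Floyd's theorem on total $\Z_2$-Betti numbers. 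And conjugacy of linear involutions in $\O(4)$ does not by itself give uniqueness of $\iota$ up to conjugation \emph{compatibly with} $\Pi$; that requires working in the normalizer of $\Pi$, which is what the paper's case-by-case arguments do.
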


\begin{proof} By assumption the
the abelianization $\Pi/[\Pi,\Pi]$ has no two torsion.
Recall that the finite groups $\Pi\subset \SO(4)$ that act freely on $\Sph^3$ 
 are conjugate to subgroups of $\gU(2)$. 
If we divide out the center of $\gU(2)$ we get a homomorphism 
$\Pi\rightarrow \SO(3)$ whose kernel is central in $\Pi$. 
By the classification of finite subgroups of $\SO(3)$ the image is either cyclic, dihedral 
or given as the orientation preserving symmetries of one of the platonic solids.

If the image is cyclic then $\Pi$ is abelian (as a central extension of a cyclic group)
and  since $\Pi$ acts freely this in turn implies that $\Pi$ itself is cyclic. 
Since the 
$\Pi/[\Pi,\Pi]$ has no two torsion, the image of $\Pi$ in $\SO(3)$ can not be dihedral 
and it can neither be given as the orientation preserving symmetries of a octagon (or the dual cube). 
 
This leaves us with three cases, $\Pi$ is either cyclic (Case 1), of tetrahedral type (Case 2) or of 
icosahedral type (Case 3). In the last two cases we will also use that the greatest common divisor 
of  the orders of the subgroups  $\Pi\cap \SU(2)$ and $\Pi\cap \mathrm{Center}(\gU(2))$ is at most $2$, 
as otherwise the action would not be free.

In all cases we establish first the existence 
of $\iota$ and it will be clear from the construction that 
$\iota$ fixes at least one circle.
In all cases this implies that the fixed point set in $\Sph^3/\Pi$ is equal to a circle 
because by a theorem of Floyd the total $\Z_2$-Betti number of the fixed point of $\iota$ 
is bounded by the the  $\Z_2$  Betti number of $  \Sph^3/\Pi$ which is $2$ by assumption.

{\em Case 1.} (Cyclic case) Here $\Pi$ is a cyclic group of odd 
order $m$. Let $\zeta\in \S^1$ be a primitive $m$-root of unity 
We may assume that for some  integer $p$ (prime to $m$) the action of  
$\Pi$ is generated by 
$\zeta\star (z_1,z_2)= (\zeta z_1,\zeta ^p z_2)$ 
where $z_1,z_2\in \C$ with $|z_1|^2+|z_2|^2=1$. 

We let $\tiota$ denote complex conjugation. 
Clearly $\tiota$ normalizes the action of $\Pi$. 
The group generated by $\Pi$ and $\tiota$ is a dihedral group 
which is generated by elements which are conjugate to $\tiota$. 
Since $\Pi$ is generated by elements with a nontrivial fixed point set 
the underlying topological space of 
$\Sph^3/\ml \tiota, \Pi\mr $ is simply connected
 and hence a $3$-sphere.
The involution $\iota\colon \Sph^3/\Pi\rightarrow \Sph^3/\Pi$ induced by $\tiota$
fixes at least one circle. 
As explained above this implies that the fixed point set is given by a circle. 
The uniqueness of $\iota$ follows from the fact that $\iota$ has to anticommute with the action of $\Pi$.

{\em Case 2.}(Tetrahedral case) We can assume $\Pi\subset \U(2)\subset \SO(4)$ 
We let $\hat{\Pi}\subset \S^3\times \S^1\subset \S^3\times \S^3= \Spin(4)$ denote the inverse 
image. We assume that $\S^1\subset \S^3$ is the image of the one parameter group $e^{i\varphi}$.

Then we can find a number $m$ coprime to $6$ 
and an integer $k\ge 0$ with $k\neq 1$ such that  
$\hat{\Pi}$ is the smallest group satisfying the following

\begin{enumerate}
\item[(i)] $\hat{\Pi}$ contains the cyclic subgroup of order $2m$ in $1\times \S^1$,
\item[(ii)] for one primitive $3^k$-th root of unity $\zeta\in \S^1$ (if $k=0$ this means $\zeta=1$),  
$\Pi$ contains the element $(\tfrac{1}{2}(1+i+j+k),\zeta)\in \S^3\times \S^1$
\item[(iii)] $\Pi$ contains the group $\{\pm 1,\pm i,\pm j,\pm k\}\times \{1\}\subset \S^3\times \S^1$.
\end{enumerate}

We put $\hiota\colon (\tfrac{1}{\sqrt{2}}(i+j),j)\in \S^3\times \S^3$. It is straightforward 
to check that $\hiota$ normalizes $\hPi$.
Clearly the image $\tiota$ of $\hiota$ in $\SO(4)$ and corresponds again to a complex conjugation 
and thus fixes a circle in $\Sph^3$. 
It is easy to see that the group $\Gamma:=\ml \Pi,\tiota\mr$ is generated by the conjugacy class 
 of 
$\tiota$ . 

Thus $\Sph^3/\ml \Pi,\tiota\mr$ is again a sphere.
As before the map $\iota\colon \Sph^3/\Pi\rightarrow \Sph^3/\Pi$ has the desired properties. 

The uniqueness follows from the fact that $\tiota$ normalizes $\Pi$ and that 
the conjugacy class of $\tiota$ generates $\ml \Pi,\tiota\mr$.

{\em Case 3.} (Icosahedral case) We can assume $\Pi\subset \U(2)\subset \SO(4)$ 
We let $\hat{\Pi}\subset \S^3\times \S^1\subset \S^3\times \S^3= \Spin(4)$ denote the inverse 
image. We assume that $\S^1\subset \S^3$ is the image of the one parameter group $e^{i\varphi}$. 
In this case $\hat{\Pi}$ is a product subgroup 
$\Pi=\Pi_1\times \Pi_2\subset  \S^3\times \S^1$, 
where $\Pi_1$ is the binary icosahedral group and $\Pi_2$ is a cyclic subgroup of order $2m$ 
with $m$ being an integer coprime to $30$.
The binary icosahedral group $\Pi_1$ is the group generated by 
$\pm i,\pm j,\pm k,\pm 1$, ${i+j}{\sqrt{2}}$, 
$\tfrac{1}{2}(1+i+j+k)$ and $i + \tfrac{1}{2}(1+\sqrt{5})j + \tfrac{2}{1+\sqrt{5}} k$. 
It has order 120 and the only normal subgroup is given by $\pm 1$. 

We put $\hiota=(j,j)$.  Clearly $\hiota$ normalizes
$\hPi$. We consider in $\ml \hPi,\hiota \mr$ the subgroup $\hat{\Gamma}$ 
generated by the conjugacy class of $\hiota$. 
The projection of $\hat{\Gamma}$ to $\S^3 \times\{1\}$ is given by 
a normal subgroup of $\Pi_1$ generated by the conjugacy class of $j$ in $\Pi_1$. 
Since $\Pi_1$ has no nontrivial  normal subgroups of order larger than $2$ we deduce
that $\hat{\Gamma}$ projects surjectively to $\Pi_1$. 
Using that the commutator group of $\Pi_1$ is equal to $\Pi_1$ 
and that the second commutator group of $\hat{\Gamma}$ is contained in $\S^3\times 1$ 
we see that $\hat{\Gamma}$ contains $\Pi_1\times \{1\}$. 
Moreover it is clear the projection  of $\hat{\Gamma}$ to $ \{1\}\times \S^3$ 
contains $\Pi_2$. In summary this proves 
$\hat{\Gamma}= \ml \hPi,\hiota \mr$. 
The image $\tiota$ of $\hiota$ in $\SO(4)$ is again a complex conjugation 
and one can finish the argument as before.

\end{proof}

\begin{cor} If $c$ is an closed embedded curved in $A=\Sph^3$ 
which is an extremal set with respect to a nonnegatively curved Alexandrov metric 
then we can find an orbifold metric $\Sph^3$ of constant curvature 1 such that 
the only orbifold singularity is of $\Z_2$ type and the singular locus is given by $c$. 
Moreover these orbifolds are classified in the proof of Lemma~\ref{space forms}.
\end{cor}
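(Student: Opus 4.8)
The plan is to assemble the pieces already established in this section and feed them into Lemma~\ref{space forms}. First I would record the starting data: since $c$ is a locally flat extremal circle, the two–fold branched cover $X_2(c)$ is defined, it is a closed nonnegatively curved Alexandrov space which is in fact a topological $3$–manifold, and it carries the deck involution $\iota$ with $X = X_2(c)/\iota$ and $c$ the image of $\Fix(\iota)$. By the preceding lemmas $\pi_1(X_2(c))$ is finite and $H_1(X_2(c),\Z_2)=0$, so by the equivariant elliptization conjecture (as recorded just above Lemma~\ref{space forms}) $X_2(c)$ is diffeomorphic to a spherical space form $\Sph^3/\Pi$ in such a way that $\iota$ goes over to a linear involution with circle fixed point set, and the vanishing of $H_1(\,\cdot\,,\Z_2)$ says $\Sph^3/\Pi$ is a $\Z_2$–homology sphere.

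Next I would invoke Lemma~\ref{space forms}: for every such $\Sph^3/\Pi$ it produces a linear involution, unique up to conjugation in $\SO(4)$, whose fixed point set is a circle, and it identifies the underlying space of $\Sph^3/\langle\Pi,\iota\rangle$ with $\Sph^3$. By the uniqueness clause the linear involution supplied by elliptization is conjugate to the one constructed in the lemma, so after conjugating we may take $\iota$ to be that standard involution. Equipping $\Sph^3$ with the round metric of curvature $1$ — which $\Pi$ and $\iota$ preserve — the quotient $\Sph^3/\langle\Pi,\iota\rangle$ inherits a constant–curvature–$1$ orbifold metric; since $\Pi$ acts freely and the only elements of $\langle\Pi,\iota\rangle\setminus\Pi$ with fixed points are conjugates of $\iota$ (as in the proof of Lemma~\ref{space forms}), and since $\langle\Pi,\iota\rangle$ is generated by that conjugacy class, the singular locus is exactly the image of $\Fix(\iota)$, a circle, with cone angle $\pi$, i.e.\ a $\Z_2$ singularity.

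It then remains to transport this orbifold structure back to $X=\Sph^3$ so that its singular locus becomes $c$. Here I would use the chain of homeomorphisms $X\cong X_2(c)/\iota\cong (\Sph^3/\Pi)/\iota=\Sph^3/\langle\Pi,\iota\rangle$, under which the branch set $c$ — being the image of $\Fix(\iota)\subset X_2(c)$ — is carried precisely onto the image of $\Fix(\iota)$, that is, onto the orbifold singular locus. Pulling the constant–curvature–$1$ orbifold metric back along this homeomorphism yields the asserted orbifold metric on $\Sph^3$ whose only singularity is the $\Z_2$ circle $c$, and the explicit list of possibilities (cyclic, tetrahedral, icosahedral type, with the involutions written down there) is exactly what the three cases in the proof of Lemma~\ref{space forms} give. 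The part requiring care is precisely this last bookkeeping: checking that the diffeomorphism $X_2(c)\cong\Sph^3/\Pi$ can be arranged to be $\iota$–equivariant so that it descends to the quotients, and tracking the branch circle through all the identifications; all the genuine analytic and group–theoretic content has already been done in the lemmas of this and the earlier section.
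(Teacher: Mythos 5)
Your proposal is correct and takes essentially the same route as the paper, which in fact leaves the corollary without a written-out proof precisely because it is the direct assembly you describe: Lemmas on local flatness and the Alexandrov structure of $X_2(c)$, Lemma 5.4 ($\pi_1$ finite, $H_1(\cdot,\Z_2)=0$), equivariant elliptization, and then Lemma~\ref{space forms} with its uniqueness clause, followed by pulling the constant curvature $1$ orbifold metric back along the identification $X\cong\Sph^3/\langle\Pi,\iota\rangle$. The only point worth flagging explicitly (and which you rightly note as the part "requiring care") is that the identification one pulls back along should be a diffeomorphism away from $c$, which in dimension $3$ follows from smoothing theory as the paper already invoked in Lemma~\ref{smoothing}.
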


\begin{rem}\label{rem: final}

(a) In the special case that $\Gamma$ is a cyclic group of odd order, 
the corresponding knots arising are, by work of Seifert, the so called two bridge knots.
By Milnor two bridge knots in $\R^3$ are exactly those 
knots which for any $\eps>0$ are isotopic to knots with total curvature 
$\le 4\pi+\eps$. In all cases, the Hopf $\S^1$ action gives rise to a Seifert fibered structure on the space form $\Sph^3/\Pi = X_2(c)$ preserved by the involution $\iota$ induced from complex conjugation whose fixed point set is $c$. The knot $c$ in the 3-sphere $X_2(c)/\iota$ is explicitly described by Montesinos via the Seifert  invariants of the fibration (see, e.g., \cite{Sa}). In particular we mention that $c$ is the $(3,5)$ - \emph{torus knot} in the special case where $X_2(c)= \Sph^3/\Pi$ is the Poincar\'e homology sphere.

(b) In his PhD thesis which is still in preparation Wolfgang Spindeler shows that 
a quotient $X$ of a simply connected nonnegatively curved manifold by an isometric $\gS^1$ action is the Gromov Haussdorff limit 
of positively curved three manifolds. If $X$ contains a closed curve $c$ in its singular set then one can also find a sequence of smooth positively 
curved orbifold structures on $X$ (Gromov Hausdorff converging to the quotient metric) such that the only orbifold singularity 
is a $\Z_2$ singularity along $c$. In particular each of these metrics induces a smooth Riemannian metric on $X_2(c)$. 
This in turn allows   that the equivariant classification of the corresponding $4$-manifolds can also be proved without using the 
Perelman's solution of the Poincar\'{e} conjecture and the $\Z_2$-equivariant version. Instead one can then just refer 
to Hamilton's classification of positively curved $3$-manifolds.
\end{rem}

\providecommand{\bysame}{\leavevmode\hbox
to3em{\hrulefill}\thinspace}

\end{document}